\documentclass[11pt]{amsart}
\usepackage{amsbsy}
\usepackage{amsmath}
\usepackage{amsbsy}
\usepackage{graphicx}
\usepackage{enumerate}
\usepackage[cp1250]{inputenc}
\usepackage{indentfirst}
\usepackage{latexsym}
\usepackage{amssymb}
\usepackage{amsfonts}
\usepackage{color}
\usepackage{dsfont}
\usepackage{mathtools}
\usepackage{amsthm}

\usepackage{tikz}
\usetikzlibrary{arrows}
\usepackage{xcolor}
\usepackage{xypic}
\usepackage{cite}
\usepackage{float}
\usepackage{filecontents}
\usepackage{graphicx}
\usepackage{epstopdf}
\usepackage{epsfig}

\definecolor{darkgreen}{rgb}{0,.7,.3}

\setlength{\textheight}{48pc}
\setlength {\textwidth}{33pc}
\addtolength{\hoffset}{-1cm}

\theoremstyle{definition}
\newtheorem{definition}{Definition}[section]
\newtheorem{remark}[definition]{Remark}

\theoremstyle{plain}
\newtheorem{lemma}[definition]{Lemma}
\newtheorem{theorem}[definition] {Theorem}
\newtheorem{proposition}[definition] {Proposition}

\newtheorem{corollary}[definition]{Corollary}

\newcommand{\PP}{\text{\rm PP}}
\newcommand{\WP}{\text{\rm WP}}
\newcommand{\CP}{\text{\rm CP}}
\newcommand{\im}{\text{\rm im}}

\newcommand{\F}{\text{\rm F}}

\begin{document}

\title{Notes about decidability of exponential equations}

\begin{abstract}
We study relationship among versions of the Knapsack Problem
where variables take values in $\mathbb{Z}$ and the number of them is fixed.
\end{abstract}

\author{Oleg Bogopolski}
\address{{Sobolev Institute of Mathematics of Siberian Branch of Russian Academy
of Sciences, Novosibirsk, Russia}\newline
{and D\"{u}sseldorf University, Germany}}
\email{Oleg$\_$Bogopolski@yahoo.com}

\author{Aleksander Ivanov (Iwanow)}
\address{Department of Applied Mathematics, Silesian Univesity of Technology,
ul. Kaszubska 23, 44 - 101 Gliwice, Poland}
\email{Aleksander.Iwanow@polsl.pl}

\keywords{decidability problems, exponential equations, knapsack problem, finitely presented groups.}
\subjclass[2010]{Primary 20F10, 20F70; Secondary 20F05.}

\maketitle

\section{Introduction}


Let $G$ be a group given by a recursive presentation $\langle X\,|\ R\rangle$.
For $w\in G$ we denote by $|w|_X$ the length of a shortest word in the alphabet $X\cup X^{-1}$ representing $w$.
The free group generated by $X$ is denoted by  $\F (X)$. The length of $u\in \F(X)$ with respect to $X$ will be often written as $|u|$.
By $\WP(G)$ and $\CP(G)$ we denote the word and the conjugacy problems for $G$, respectively. In
formulations of decision problems, we assume that elements of $G$ are represented by words in the alphabet $X\cup X^{-1}$.

Let $\PP[n]$ be the set of all tuples $\bar{g}=(g_0 ,g_1 ,\ldots ,g_n )$ from $G^{n+1}$
such that the equation
$g_0 = g^{z_1}_{1} \cdot \ldots \cdot g^{z_n}_n$ has a solution which is an $n$-tuple of integers.
We define the {\it power problem of order} $n$ for $G$ to be the membership problem for the set $\PP[n]$.
For brevity, we denote this decision problem again by $\PP[n]$.

In the case $n=1$ the decision problem $\PP [1]$ is known as the {\it power problem}, see \cite{MC} and \cite{OS2}.
Our notes are motivated by the following question.

\medskip

\begin{center}
{\em What is the relationship among the decision problems $\PP [n]$ for different $n$?}
\end{center}

\medskip

It is easy to see that decidability of the power problem implies decidability of $\WP$.
Furthermore, decidability of $\PP [n+1]$ implies decidability of
$\PP [n]$.
Indeed, in order to verify if $(g_0 ,g_1 ,\ldots ,g_n ) \in \PP [n]$
we just check if $(g_0 ,g_1 ,\ldots ,g_n ,e)\in \PP [n+1]$, where $e$ is the unit of $G$.
The main result of this paper shows that decidability of $\PP [1]$
does not imply decidability of $\PP [2]$.
\medskip

{\bf Theorem A.}
{\it There exists a finitely presented group with decidable $\PP[1]$ and undecidable $\PP[2]$.
Moreover, this group has decidable conjugacy problem.}
\medskip

The next note concerns estimation of possible solutions of
exponential equations arising in $\PP [n]$ by recursive functions on sizes of coefficients.
The motivation comes from the fact that this is a usual way to solve such equations.
In Proposition~\ref{unsolvable} we show that primitively recursive functions are not sufficient for this aim.
More information about the complexity of estimating functions for some interesting classes of groups can be found
in Remarks~\ref{complexity_remark_1} and~\ref{complexity_remark_2}.

\medskip

In Section 5 we introduce decision problems $\PP[g,G^n ]$ and $\PP[G,\overline{g}]$ which can be considered as fragments of $\PP[n]$ for $G$.
We show that these fragments can have diverse r.e. Turing degrees in the same finitely presented group.
From a quite general Theorem~\ref{1/2}, we deduce the following statement.

\medskip

{\bf Theorem B.} (see Corollary~\ref{cor1/2})
{\it There exists a finitely presented torsion-free group $G$ with decidable conjugacy problem and undecidable $\PP[1]$ such that any\break r.e. Turing degree is realised as the Turing degree of the problem
$\PP[g,G]$ for appropriate $g\in G$.}

\medskip

The methods which we use strongly depend on papers
\cite{O1} -- \cite{OS2}.

\medskip

In the places where arguments are of computability theory flavor we follow the terminology of \cite{soare} (in particular we write ``computable" instead of ``recursive").
In the remaining parts of the paper we keep the traditional terminology.

\begin{remark}
The Knapsack Problem of $G$ is the decidability problem of
recognizing of all tuples $\bar{g} \in G^{n+1}$, $n\in \mathbb{N}$,
such that $\bar{g} = (g_0 ,g_1 ,\ldots g_n )$
and the equation
$g_0 = g^{z_1}_{1} \cdot \ldots \cdot g^{z_n}_n$ has a solution which
a tuple of {\em natural numbers}.
It is easy to see that decidability of the Knapsack Problem in $G$ implies decidability of the set $\bigcup_{n\in \mathbb{N}} \PP [n]$.
The Knapsack Problem  was introduced in \cite{MNU}.
It has become a very active area of research
where the interplay between group theoretic properties and algorithmic complexity is a typical topic,
see \cite{Dudkin}, \cite{Frenkel},  \cite{GKLZ}, \cite{KLZ},
\cite{Lohrey_1}, \cite{Lohrey_2}, \cite{LZ_1}, \cite{LZ_2}, and~\cite{MT}.
On the other hand questions similar to ones studied in our paper seem to be open for the Knapsack Problem.
We will comment this below.
\end{remark}

\begin{remark}
Several authors consider exponential equations in the following form:
\[
h_1 g_1^{z_1} h_2 g^{z_2}_2 \dots h_n g_n^{z_n}=1.
\]
It is worth noting that this equation can be rewritten as
\[
f_1^{z_1}f_2^{z_2}\dots f_n^{z_n} = f_0,
\]
where $f_0=(h_1 \dots h_n)^{-1}$ and $f_i=(h_1\dots h_i)g_i(h_1 \dots h_i)^{-1}$ for $i=1,\dots,n$.\break
In particular, the question about decidability of $\PP [n]$ does not depend on the form of exponential equations.
\end{remark}

\section{A recursively presented group with decidable PP[1] and undecidable PP[2]}

The main purpose of this section is the following weaker version of Theorem A.

\begin{proposition}\label{four}
There exists a recursively presented group $G$ such that
$\PP[1]$ is  decidable, but $\PP[2]$ is undecidable.
\end{proposition}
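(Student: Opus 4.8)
The plan is to build $G$ as a recursive amalgam of gadgets that separate the behaviour of one-variable and two-variable exponential equations. The natural engine is a finitely generated recursively presented group $H$ together with a recursive sequence of elements $(a_i)_{i\in\mathbb N}$ whose orders encode a computably enumerable but non-computable set $S$: say $a_i$ has order $n_i$ if $i\in S$ (with $n_i$ effectively listed as $S$ is enumerated) and $a_i$ has infinite order if $i\notin S$. Using a standard small-cancellation or HNN construction over a free group one can produce such $H$ with solvable word problem, so that $\PP[1]$ reduces to deciding, for a given coefficient, whether a power of it equals a given element — and one arranges the construction so that these one-variable instances only ever ``see'' data that is decidable. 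The undecidability of $\PP[2]$ will then come from a two-variable equation of the shape $g_0=b_i^{z_1}a_i^{z_2}$ (or $g_0=a_i^{z_1}c_i^{z_2}$) whose solvability is equivalent to ``$a_i$ has finite order'', i.e. to $i\in S$, which is non-computable.

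Concretely, I would first fix a c.e.\ non-computable set $S=\bigcup_s S_s$ with a recursive enumeration. Second, I would describe a recursive presentation: generators $x,y$ (free part) plus, for each $i$, a generator $a_i$ together with recursively listed relators that (a) are trivial until stage $s$ at which $i$ enters $S$, and (b) from that stage impose $a_i^{n_i}=1$, the exponent $n_i$ being read off from the enumeration. One must check that the resulting presentation is genuinely recursive (the relator set is c.e., hence after the usual symmetrisation/normal-form trick, recursive as a set of defining relators in the Britton/small-cancellation sense) and that distinct branches do not collapse, so that when $i\notin S$ the element $a_i$ really does have infinite order in $G$. Third, I would verify the word problem is solvable — this is where a small-cancellation or graph-of-groups normal form does the work, since each finite word only involves finitely many $a_i$ and the relators active among them are determined by a finite initial segment of the enumeration, which is computable. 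Fourth, I would show $\PP[1]$ is decidable: a single instance $g_0=g_1^{z_1}$ with $g_1$ a word in finitely many generators can be decided because within the (computably determined) finite subgroup structure seen by $g_1$, powers are periodic or the subgroup is free, in either case $\WP$-reducible. Finally, for $\PP[2]$ I would exhibit the many-one reduction $i\mapsto(g_0^{(i)},g_1^{(i)},g_2^{(i)})$ with $(g_0^{(i)},g_1^{(i)},g_2^{(i)})\in\PP[2]\iff i\in S$, typically by choosing $g_0^{(i)}$ so that the equation forces $z_2$ to be a nonzero multiple of the hypothetical order of $a_i$ and forces $z_1$ to compensate, which is possible exactly when that order is finite.

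The main obstacle I expect is the tension between items three, four and five: the very mechanism that makes $\PP[2]$ undecidable (the ``order of $a_i$'' being a non-computable invariant) threatens to leak into one-variable instances and into the word problem. The delicate point is to engineer the amalgamation so that a single power $g_1^{z_1}$ can never detect whether $a_i$ has collapsed — for instance by ensuring each $a_i$ sits inside its own isolated vertex group attached along a malnormal edge, so that a reduced form of $g_1^{z_1}$ has length growing with $|z_1|$ unless $g_1$ is conjugate into a specific cyclic or finite piece whose structure is already computable, while a product of two powers can ``cancel around a loop'' and thereby probe the order. Controlling these normal forms — and proving the no-collapse claim for $i\notin S$ and a clean length/cancellation estimate for $\PP[1]$ — is the technical heart of the argument; once those are in hand, the reduction for $\PP[2]$ and the conjugacy refinement needed later for Theorem~A are comparatively routine.
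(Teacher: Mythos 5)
There is a genuine gap, and it sits exactly where your construction carries its load: the claim that $\PP[1]$ stays decidable while the orders of the $a_i$ encode a non-computable set. If $a_i$ has finite order $n_i$ when $i\in S$ and infinite order otherwise, then one-variable instances already detect this inside the cyclic subgroup $\langle a_i\rangle$, before any normal-form or malnormality considerations can intervene. For example, if the $n_i$ are odd, the instance $a_i^{-1}=(a_i^{2})^{z}$ is solvable exactly when $i\in S$ (solvable modulo an odd $n_i$, unsolvable in infinite order), so the map $i\mapsto(a_i^{-1},a_i^{2})$ is a many-one reduction of $S$ to $\PP[1]$ and your group has \emph{undecidable} $\PP[1]$. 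More generally, for any coefficient $a_i^{t}$ with $\gcd(t,n_i)=1$ and $t\nmid s$, solvability of $a_i^{s}=(a_i^{t})^{z}$ flips between the torsion and torsion-free cases, and the size of the instance gives no control over $n_i$ or over the needed exponent $z$. Your proposed safeguard (isolating each $a_i$ in its own vertex group along malnormal edges so that $g_1^{z_1}$ cannot ``probe the order'') cannot repair this, because the offending instances live entirely inside one cyclic piece; and your step four (``powers are periodic or the subgroup is free, in either case $\WP$-reducible'') is circular, since periodicity only yields a decision procedure if one knows the period, i.e.\ the order of $a_i$, which is precisely the non-computable datum. A torsion-based variant could conceivably be rescued by imposing strong arithmetic conditions on $n_i$ (e.g.\ $n_i$ divisible by $\mathrm{lcm}(1,\dots,s)$ where $s$ is the enumeration stage, so that small-exponent instances cannot distinguish the two cases and large instances can be settled by running the enumeration), but your proposal states no such condition and proves no bounding lemma, and the $\PP[2]$ reduction is likewise only gestured at (as written, $b_i=b_i^{z_1}a_i^{z_2}$ is always solvable with $z_2=0$).

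What is missing, in one sentence, is a uniform computable bound on the exponent of any one-variable solution that holds \emph{whether or not} the hidden relation is present. The paper obtains exactly this by staying torsion-free: it takes $G=\langle\,\bigcup_i\{a_i,b_i,c_i\}\mid c_{f(i)}=a_{f(i)}^{i}b_{f(i)}^{i}\,\rangle$ for a recursive injection $f$ with non-recursive range, so $G$ is a free product of free groups of rank $2$ or $3$, and its key Lemma~\ref{two} shows that if $w(a,b,a^{m}b^{m})=v(a,b)^{z}$ with $m$ larger than every $a$- or $b$-exponent of $w$, then $|z|\leqslant|w|$ --- a bound independent of the unknown $m$, which is what makes $\PP[1]$ (and $\WP$) decidable without ever deciding membership in $\im f$. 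The two-variable equation $c_k=a_k^{x}b_k^{y}$ escapes any such bound because its unique solution $x=y=i$ grows with the hidden parameter, giving undecidability of $\PP[2]$. Until you either prove an analogous instance-size bound for your torsion mechanism or replace it by a torsion-free hidden relation of this kind, the proposal does not establish the proposition.
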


In the proof of this proposition we use
the following lemmas. The first one is obvious.

\begin{lemma}\label{one}
Let $w$ and $u$ be two nontrivial elements of the free group $\F(X)$.
If $w=u^z$ for some $z\in \mathbb{Z}$, then
$|z|\leqslant |w|_X$.
\end{lemma}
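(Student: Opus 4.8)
The plan is to pass from $u$ to a cyclically reduced conjugate and to use the elementary fact that powers of a cyclically reduced word exhibit no cancellation. First, note that $z\neq 0$: otherwise $w=u^{z}=e$, contradicting the nontriviality of $w$; hence $|z|\geqslant 1$. So the inequality $|z|\leqslant|w|_X$ only needs to be checked for $z\neq 0$, and since the statement is symmetric under $z\mapsto -z$ (because $u^{-z}=(u^{-1})^{z}$ and $|w|_X=|w^{-1}|_X$) one could even assume $z\geqslant 1$.

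Next I would write $u=vrv^{-1}$, where this product is reduced (that is, $|u|=2|v|+|r|$) and $r$ is cyclically reduced; since $u\neq e$ we have $r\neq e$, so $|r|\geqslant 1$. Because $r$ is cyclically reduced, $r^{z}$ is a reduced word with $|r^{z}|=|z|\cdot|r|$; and since no cancellation occurs between $v$ and $r$, nor between $r$ and $v^{-1}$, in the reduced expression $u=vrv^{-1}$, the same holds for $u^{z}=vr^{z}v^{-1}$, so that this last word is already reduced. Consequently
\[
|w|_X=|w|=|u^{z}|=2|v|+|z|\cdot|r|\;\geqslant\;|z|\cdot|r|\;\geqslant\;|z|,
\]
which is exactly the assertion. (Equivalently, the middle step can be read geometrically: in the Cayley tree of $\F(X)$ the element $u$ acts as a hyperbolic isometry whose translation length equals $|r|\geqslant 1$, and $|u^{z}|$ differs from $|z|\cdot|r|$ by the nonnegative quantity $2|v|$.)

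There is essentially no obstacle here, which is why the authors call the lemma obvious; the one point that deserves a line of justification is the claim that $vr^{z}v^{-1}$ is reduced as written, i.e.\ the standard free‑group computation $|g^{n}|=|n|\cdot\ell+2d$, where $\ell$ is the length of the cyclically reduced conjugate of $g$ and $d$ the length of the conjugating prefix. Everything else is bookkeeping with reduced words.
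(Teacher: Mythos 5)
Your proof is correct. The paper states Lemma~\ref{one} without proof (it is simply declared obvious), and your argument --- writing $u=vrv^{-1}$ in reduced form with $r$ cyclically reduced and nontrivial, so that $|w|_X=|u^{z}|=2|v|+|z|\,|r|\geqslant |z|$ --- is exactly the standard justification, with the cases $z=0$ and $z<0$ handled properly.
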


\begin{lemma}\label{two} Let $w(a,b,c)$ be a nonempty reduced cyclic word in $\F(a,b,c)$ and let
$$
M=\max \{|z|: w\hspace*{2mm} {\text {\rm has a subword of the form}}\hspace*{2mm} a^z
{\text{\rm or}}\hspace*{2mm} b^z\}.
$$
Suppose that $m>M$.
Then $w(a,b,a^mb^m)\neq 1$ in $\F(a,b)$. Moreover, if
$$
w(a,b,a^mb^m)=v(a,b)^z
$$
for some $v(a,b)\in \F(a,b)$ and $z\in \mathbb{Z}$, then $|z|\leqslant |w(a,b,c)|$.
\end{lemma}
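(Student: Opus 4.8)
The plan is to substitute $c\mapsto a^mb^m$ and keep track of the free cancellation this produces. Write $L:=|w(a,b,c)|$ and let $\phi\colon\F(a,b,c)\to\F(a,b)$ be the homomorphism with $\phi(a)=a$, $\phi(b)=b$, $\phi(c)=a^mb^m$; I must show $\phi(w)\neq1$ and that $\phi(w)=v^z$ forces $|z|\leqslant L$. Since both conclusions are unaffected by replacing $\phi(w)$ by a conjugate, I may cyclically permute the chosen reduced representative of $w$ and pass to the cyclic reduction $\overline w$ of $\phi(w)$ when convenient. If $w$ contains no $c^{\pm1}$, then $\phi(w)=w$ is a nonempty reduced word, and if $w=v^z$ with $v\in\F(a,b)$ then Lemma~\ref{one}, applied in $\F(a,b)$, gives $|z|\leqslant|w|=L$. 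So I assume from now on that $w$ involves $c$, and write $w$, after a cyclic permutation, as $w=c^{\nu_1}q_1c^{\nu_2}q_2\cdots c^{\nu_s}q_s$ with $s\geqslant1$, $\nu_i\in\mathbb{Z}\setminus\{0\}$ and $q_i\in\F(a,b)$, where $q_i\neq1$ whenever $s\geqslant2$. (If $s=1$ and $q_1$ is empty, then $\phi(w)=(a^mb^m)^{\nu_1}$ or $(b^{-m}a^{-m})^{|\nu_1|}$ is already reduced and cyclically reduced with $2|\nu_1|\geqslant2$ syllables, which settles both claims; so I may assume each $q_i$ is nontrivial.) Since $w$ is reduced, the $a$- and $b$-syllables of $w$ are precisely the syllables of the $q_i$, and each of them has exponent of absolute value at most $M<m$.

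For the nontriviality I would argue as follows. Each $\phi(c^{\nu_i})$ is a reduced word with $2|\nu_i|$ syllables, all of exponent $\pm m$ and alternating between $a$ and $b$; in $\phi(w)=\phi(c^{\nu_1})q_1\cdots\phi(c^{\nu_s})q_s$ cancellation can occur only at the seams where a block $\phi(c^{\nu_i})$ meets an adjacent $q_i$. A direct case analysis on the signs of the $\nu_i$ and on which generators the syllables meeting at a seam involve — all of it resting on the single inequality $m>M$ — shows (i) that whenever two syllables $g^p$, $g^{p'}$ on the same generator become adjacent in the course of reducing, one has $p+p'\neq0$, so they merge to a \emph{nonzero} syllable and no syllable is ever destroyed outright, and (ii) that, because every block $\phi(c^{\nu_i})$ contains syllables on both generators, the cancellation created at one $q_i$ never propagates across a whole block, and hence removes at most two syllables near that $q_i$. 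Counting: the reduced form of $\phi(w)$ therefore has at least $\sum_i2|\nu_i|+\sum_i(\#\,\text{syllables of }q_i)-2s\geqslant 2s+s-2s=s\geqslant1$ syllables, so $\phi(w)\neq1$. The same count, applied crudely, shows that $\overline w$ has at most $\sum_i2|\nu_i|+\sum_i\ell(q_i)\leqslant L+\sum_i|\nu_i|\leqslant 2L$ syllables, where $\ell(q_i)$ is the length of $q_i$ and $\sum_i|\nu_i|+\sum_i\ell(q_i)=L$.

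For the bound on $z$: if $z=0$ then $\phi(w)=1$, contrary to the above, so $z\neq0$ and $\phi(w)\neq1$. Write $\phi(w)=\rho^{e}$ with $\rho$ not a proper power and $e\geqslant1$. Both $\langle v\rangle$ and $\langle\rho\rangle$ centralize $\phi(w)\neq1$, hence lie in the cyclic group $\langle\rho\rangle$, so $v\in\langle\rho\rangle$ and $|z|\leqslant e$; it remains to see $e\leqslant L$. After conjugating I may take $\rho$ cyclically reduced, so that $\overline w=\rho^{e}$ as cyclic words and the number of syllables of $\overline w$ is $e$ times that of $\rho$. If $\rho$ has at least two syllables then $\overline w$ has at least $2e$ syllables, whence $2e\leqslant2L$ by the previous paragraph and $e\leqslant L$. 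If $\rho$ has only one syllable then, not being a proper power, $\rho=g^{\pm1}$ with $g\in\{a,b\}$, so $\phi(w)$ is conjugate to $g^{\pm e}$; passing to the abelianisation of $\F(a,b)$ and using $\phi(a)=a$, $\phi(b)=b$, $\phi(c)=a^mb^m$, one finds that the exponent sum of the generator other than $g$ in $w$ equals $-m$ times the exponent sum of $c$ in $w$, and then a one-line computation yields $e=|\sigma_a(w)-\sigma_b(w)|\leqslant|\sigma_a(w)|+|\sigma_b(w)|\leqslant L$, where $\sigma_a(w),\sigma_b(w)$ denote the exponent sums of $a$, $b$ in $w$. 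In all cases $|z|\leqslant e\leqslant L=|w(a,b,c)|$.

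The one step I expect to require real care is the case analysis (i)--(ii) in the nontriviality argument: checking that the big syllables $a^{\pm m}$, $b^{\pm m}$ coming from the substitution can never be cancelled away — only shortened by the bounded amount $M$ — and that a cancellation starting at one $q_i$ cannot survive past the block $\phi(c^{\nu_{i+1}})$. Everything else is routine counting together with Lemma~\ref{one}.
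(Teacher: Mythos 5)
Your argument is correct in substance and follows the same basic strategy as the paper's proof: substitute $c\mapsto a^mb^m$, use $m>M$ to control cancellation at the seams, and bound $|z|$ by counting syllables of the cyclically reduced image (your upper bound $L+\sum_i|\nu_i|\leqslant 2L$ is exactly the paper's $n\leqslant 2|w|$). Two differences are worth recording. First, the paper proves the sharper fact that the cyclic reduction of $w(a,b,a^mb^m)$ has at least \emph{two} syllables (via the claim that the image of a subword beginning, resp.\ ending, with $c^{\pm1}$ begins with $a^m$ or $b^{-m}$, resp.\ ends with $b^m$ or $a^{-m}$, plus the split into the alternating case and the case $w=cpcq$); this forces the cyclic reduction of $v$ to have at least two syllables, so $|z|\leqslant n/2\leqslant|w|$ drops out immediately. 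Your count only yields at least $s\geqslant 1$ syllables, and you close the possible one-syllable root case by the abelianization computation -- a genuinely different, and correct, way to finish. Second, the case analysis you defer in (i)--(ii) does go through and is essentially the content of the paper's Claim: a block syllable $a^{\pm m}$ or $b^{\pm m}$ can only merge with $q_i$-syllables of absolute value at most $M<m$ (and, when $q_i$ is a single syllable, with the facing block syllable, which necessarily has the opposite sign), so every merged exponent has the form $\pm m+j$ or $j$ with $0<|j|\leqslant M$, hence is nonzero, and cancellation never crosses a block; so treat this as the heart of the proof rather than a routine afterthought, since as written you assert it rather than prove it. A small imprecision: the syllable count of $\overline w$ need not be exactly $e$ times that of $\rho$ (junction syllables merge when $\rho$ begins and ends on the same generator), but then $\rho$ has at least three syllables and the inequality $\geqslant 2e$ that you actually use still holds.
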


\medskip

{\it Proof.} We assume that $w$ contains at
least one $c$ or $c^{-1}$ (otherwise the statement is obvious).
For any word $u(a,b,c)\in \F(a,b,c)$ let $T(u)\in \F(a,b)$ be the reduced word obtained from $u$ by substitution $c \rightarrow a^mb^m$ followed by reduction. If $u(a,b,c)$ is considered as a cyclic word, we
denote by $CT(u)$ the cyclic word, obtained from $u$ by substitution $c \rightarrow a^mb^m$ followed by cyclic reduction.
The following claim can be proved by induction on the number of occurrences of letters $c^{\pm}$ in $u$.

{\it Claim.} Let $u$ be a subword of the cyclic word $w$ from lemma.

1) If $u$ begins with $c$ (resp. with $c^{-1}$) then $T(u)$ begins with $a^m$
(resp. with~$b^{-m}$).

2) If $u$ ends with $c$ (resp. with $c^{-1}$), then $T(u)$ ends with $b^m$
(resp. with $a^{-m}$).

\medskip

Now we show that $CT(w)$ contains at least two syllables.
In particular, this will imply
$w(a,b,a^mb^m)\neq 1$ in $\F(a,b)$.
The case where $w$ contains only one occurrence of $c$ or $c^{-1}$ is obvious. We assume that $w$ contains at
least 2 occurrences of $c^{\pm 1}$.

The case, where the occurrences of $c$ and $c^{-1}$ in $w$ alternate,
i.e. if $w=u_1cu_2c^{-1}\dots cu_sc^{-1}$, where $u_i\in \F(a,b)$ for all $i$, is easy. Indeed, each subword $c^{-1}u_ic$ gives at least one $a$-syllable, and each subword $cu_ic^{-1}$ gives at least one $b$-syllable.

\medskip

Now suppose that $w$ does not have this form. Then the cyclic word $w$ has the form $cpcq$ or $c^{-1}pc^{-1}q$, where $p\in \F(a,b,c)$ and $q\in \F(a,b)$. Using inversion, we may assume the first case.
Note that $CT(cpcq)$ can be obtained from $T(cpc)q$ by cyclic reduction.
By the claim above, the word $T(cpc)$ begins with $a^m$ and ends with $b^m$. By assumption, the absolute values of exponents of all syllables in the word $q$ are less than $m$. Therefore, after cyclic reduction of $T(cpc)q$, some nontrivial parts of the syllables $a^m$ and $b^m$ remain.

Thus, we have proved that the cyclic word $CT(w)$ contains $n$ syllables for some $n\geqslant 2$. Hence the cyclic word $CT(v)$ also contains at least two syllables, say $m$. Then $z=n/m\leqslant n/2$. It remains to note that $n\leqslant 2|w|$. The latter is valid since, after substitution $c\rightarrow a^mb^m$ in $w$, the total number of $a$-syllables and $b$-syllables increases by at most $2k$, where $k$ is the number of occurrences of $c^{\pm 1}$ in $w$.\hfill $\Box$

\medskip

{\it Proof of Proposition~\ref{four}}.
Our construction resembles McCool's example from~\cite{MC}.
Let $f:\mathbb{N}\rightarrow \mathbb{N}$ be a one-to-one recursive function with non-recursive range.
Consider the following infinite presentation:
\[
G=\big\langle \underset{i\in \mathbb{N}}{\cup}\{ a_i,b_i,c_i\}\,\,|\, c_{f(i)}=a_{f(i)}^ib_{f(i)}^i\,\, (i\in \mathbb{N})\big\rangle.\eqno{(2.1)}
\]
Let  $X=\underset{i\in \mathbb{N}}\cup X_i$, where $X_i=\{a_i,b_i,c_i\}$.
Let $H_i$ be the subgroup of $G$ generated by~$X_i$.
Then
\[
G=\underset{j\in \mathbb{N}}{\ast} H_j \eqno{(2.2)}
\]
where each $H_j$ is free and
\[
{\rm rk}(H_j)=
\begin{cases}
2 & \hspace*{2mm} {\text{\rm if}}\hspace*{2mm} j\in \im f,\\
3 & \hspace*{2mm} {\text{\rm if}}\hspace*{2mm} j\notin \im f.
\end{cases}
\]
{\bf Claim 1.}
The word problem is decidable for the presentation (2.1).

\medskip

{\it Proof.} Using the normal form of an element of the free product (2.2), we reduce $\WP (G)$ to the following problem.
Given $j\in \mathbb{N}$ and given a reduced nonempty word $w(a_j,b_j,c_j)$,
decide whether the corresponding element of $H_j$ is trivial or not.
The difficulty is that we do not know whether $j\in \im (f)$ or not.


From now on we consider $w(a_j,b_j,c_j)$ as a nonempty reduced {\it cyclic} word in $\F(a_j,b_j,c_j)$.
Let $M$ be the maximum of absolute values of exponents of $a_j$ and $b_j$ in the word $w(a_j,b_j,c_j)$.

First we verify whether there exists $m\leqslant M$ with $j = f(m)$ or not.
If such $m$ exists, we substitute $a_j^m b_j^m$ for $c_j$ in $w(a_j,b_j,c_j)$ and verify whether the resulting word is trivial in $\F(a_j,b_j)$ or not.
This can be done effectively.

We claim that in the remaining cases the word $w$ is nontrivial in $G$.
Indeed, if $j\notin \im f$, then $H_j\cong \F(a_j,b_j,c_j)$, hence $w(a_j,b_j,c_j)$ is nontrivial in~$H_j$.
If $j=f(m)$ for some $m>M$, then $w(a_j,b_j,c_j)=w(a_j,b_j,a_j^mb_j^m)$ is nontrivial in $\F(a_j,b_j)$ by Lemma~\ref{two}. \hfill $\Box$

\medskip
\noindent
{\bf Claim 2.} The group $G$ has undecidable $\PP[2]$.

\medskip

{\it Proof.}
The equation $c_k=a_k^xb_k^y$ is solvable if and only if
$k=f(i)$ for some $i$ (in this case $x=y=i$ is the unique solution).
Since the set $\im (f)$ is not recursive, we cannot recognize whether such $i$ exists or not.
Therefore we cannot recognize the existence of such $x$ and $y$.\hfill $\Box$

\bigskip
\noindent
{\bf Claim 3.} The group $G$ has decidable $\PP[1]$.

\medskip

{\it Proof.} Consider an exponential equation
\[
u=v^z,\eqno{(2.3)}
\]
where $u$ and $v$ are nontrivial words in the alphabet $X=\underset{i\in \mathbb{N}}{\cup} X_i$.
In order to decide if it is solvable we may assume that $u\neq 1$ and $v\neq 1$ in $G$.

We write $u=u_1u_2\dots u_k$, where $u_i$ is a word in the alphabet $X_{\lambda (i)}$ for some $\lambda(i)$, $i=1,\dots,k$, and $\lambda(j)\neq \lambda(j+1)$ for $j=1,\dots ,k-1$.
Moreover (using decidability of $\WP(G)$), we assume that each $u_i$ represents a nontrivial element of $H_{\lambda (i)}$.
Using conjugation, we may additionally assume that
$\lambda(1)\neq \lambda(k)$ if $k>1$.
Analogously, we write $v=v_1v_2\dots v_{\ell}$.

Suppose that $k>1$. Then the necessary condition for solvability of equation (2.3) is $\ell >1$
and $v_1$ and $v_{\ell}$ belong to different subgroups $H_j$ (determined by $u_1$ and $u_k$).
If this condition is fulfilled, then any possible solution $z$ of equation (2.3) satisfies
$|z|=k/\ell$, and the existence of a solution $z$ can be verified using decidability of $\WP(G)$.

Let $k=1$.
Then the necessary condition for solvability of equation (2.3) is $\ell=1$.
Thus, we assume that $u,v$ are words in the alphabet $X_j$ for some $j$.
We want to solve the equation
\[
u(a_j,b_j,c_j)=v(a_j,b_j,c_j)^z.\eqno{(2.4)}
\]
Without loss of generality, we assume that $u(a_j,b_j,c_j)$ is a reduced cyclic word.
Let $M$ be the maximum of absolute values of exponents of $a_j$ and $b_j$
in $u(a_j,b_j,c_j)$.

First we check whether some $m\in \{1,\dots ,M\}$ satisfies $f(m)=j$.
If such $m$ is found, the equation (2.4) takes the form
\[
u(a_j,b_j,a_j^mb_j^m)=v(a_j,b_j,a_j^mb_j^m)^z ,
\]
and the solvability of this equation can be verified by Lemma~\ref{one}.

If no such $m$ exists, then either $j\notin \im f$, or $j=f(m)$ for some $m>M$.
We claim that in these cases the absolute value of a possible solution $z$ of
equation (2.4) does not exceed the length of the word $u(a_j,b_j,c_j)$ in $\F(a_j ,b_j ,c_j )$.
Indeed, if $j\notin \im f$, then $H_j$ is the free group with basis $\{a_j,b_j,c_j\}$, and the claim follows from Lemma~\ref{one}. If $j\in \im f$, then the claim follows from Lemma~\ref{two}.

Using the estimation for $|z|$ and decidability of $\WP(G)$, we can verify whether equation (2.4) has a solution.\hfill $\Box$ $\Box$

\begin{remark}
One can show that the group $G$ constructed in the proof of Proposition~\ref{four} has solvable conjugacy problem.
However we do not need this for the proof of Theorem~A.
\end{remark}

\section{Proof of Theorem A}

Below we deduce Theorem A from Proposition~\ref{four} and the following result of Ol'shanskii and Sapir.

\begin{theorem} {\rm (see~\cite[Theorem 1]{OS2})}\label{OS2}
Every countable group $G=\langle x_1,x_2,\dots \,|\, R\rangle$ with solvable power problem
is embeddable into a 2-generated finitely presented group
$\overline{G}=\{y_1,y_2\,|\, \overline{R}\}$ with solvable conjugacy and power problems.
\end{theorem}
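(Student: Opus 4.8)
\medskip

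\emph{Proof proposal.}
The plan is to realise this as an \emph{effective} Higman-type embedding, assembled from three ingredients: the Higman--Neumann--Neumann embedding of a countable group into a $2$-generated one, Higman's embedding of a recursively presented group into a finitely presented one, and the refinement of the latter through $S$-machines (Sapir) and graded small cancellation (Ol'shanskii), which is what actually yields control over the conjugacy and power problems. I would first note that solvability of $\PP[1]$ for $G$ already forces solvability of $\WP(G)$: taking $g_1=e$ we have $g_1^z=e$ for all $z$, so $g_0=e$ exactly when $(g_0,e)\in\PP[1]$. In particular the presentation $\langle x_1,x_2,\dots\mid R\rangle$ may be assumed recursive.

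Next I would embed $G$ into a $2$-generated group $G_1$ by the classical construction. Inside $G*\F(a,b)$, letting $g_i$ enumerate the elements of $G$ and setting $a_i=b^{-i}ab^{i}$, both the families $\{a_i\}$ and $\{g_ia_i\}$ freely generate free subgroups, and the HNN extension with stable letter $t$ satisfying $ta_it^{-1}=g_ia_i$ embeds $G*\F(a,b)$ and places each $g_i=ta_it^{-1}a_i^{-1}$ in the group generated by $a,b,t$ (a standard step reduces from three generators to two). Using Britton's lemma and the free-product structure one checks that $\PP[1]$ for $G_1$ is decidable relative to $\PP[1]$ for $G$, so no essential information is lost at this stage.

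The substance of the theorem is the passage from the recursively presented $G_1$ to a finitely presented overgroup that keeps both problems decidable. Here I would encode the decision algorithm for the power problem as an $S$-machine $\mathcal{M}$ which, on a word over the generators together with a target, runs the $\PP[1]$-procedure and accepts precisely the admissible instances. Following the Ol'shanskii--Sapir template I would then realise $\mathcal{M}$ as a distinguished block of defining relations inside a finitely presented group $\overline{G}$ built as an iterated amalgam with a ``hub'' relation, arranged so that the presentation is aspherical and satisfies a graded small-cancellation condition. Consequently every reduced van Kampen diagram, and every annular diagram, decomposes into $\mathcal{M}$-bands and free regions whose size is bounded in terms of the boundary label. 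From this one reads off that $G$ embeds in $\overline{G}$, that $\overline{G}$ is $2$-generated and finitely presented, and that conjugacy and powers in $\overline{G}$ reduce, through the band and annulus analysis, to the solvable $\PP[1]$ of $G$ together with terminating combinatorial searches; hence both problems for $\overline{G}$ are solvable.

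The hard part is this last reduction: proving that the embedding creates no spurious roots and no undecidable conjugacies, so that the power and conjugacy problems of $\overline{G}$ really are computed from the oracle for $\PP[1]$ in $G$. This is exactly where the quantitative control of $S$-machine computations and the diagram surgery of Ol'shanskii and Sapir are indispensable (cutting along maximal bands, bounding the number of hubs in a minimal diagram, and ruling out short annuli between distinct $G$-classes); by contrast, the embedding, the generator count, and finite presentability are a routine assembly of the standard steps.
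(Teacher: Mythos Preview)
Your outline captures the high-level architecture (pass to two generators, then apply an Ol'shanskii--Sapir style Higman embedding via $S$-machines), but it omits the one step that actually produces decidable \emph{conjugacy} in $\overline{G}$, and without it the argument does not close.

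The hypothesis gives you only solvable $\PP[1]$ for $G$; there is no assumption that $\CP(G)$ is decidable. The Ol'shanskii--Sapir machinery you invoke does not manufacture decidable conjugacy out of nothing: the band/annulus analysis over the $S$-machine presentation reduces conjugacy in the big group to conjugacy (and power) in the embedded group, so it \emph{preserves} decidability of $\CP$ but needs it as input. In your chain $G\hookrightarrow G_1\hookrightarrow\overline{G}$, nothing guarantees that $G_1$ has decidable $\CP$, and encoding ``the decision algorithm for $\PP[1]$'' as an $S$-machine does not help with annular diagrams whose boundary words come from $G_1$-conjugacy classes. The paper's route (following \cite{OS2}) inserts a preliminary step precisely for this purpose: using Collins' observation, one first passes via a recursive sequence of HNN extensions $\langle H,t\mid t^{-1}at=b\rangle$ (one for each pair of same-order elements) to a recursively presented group $G_1$ in which any two elements of the same order are conjugate. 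Then $\CP(G_1)$ reduces to the order problem, which is decidable from $\PP[1]$, so $G_1$ has decidable conjugacy \emph{and} power problems. Only after that does one embed into a $2$-generated group (via Ol'shanskii's substitution $x_i\mapsto a^{100}b^i a^{101}b^i\cdots a^{199}b^i$, not the classical HNN trick you describe) and then into a finitely presented group via the $S$-machine construction; both of these steps are known to preserve decidability of $\CP$ and $\PP[1]$.

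A secondary point: your $2$-generator embedding through $G*\F(a,b)$ and a stable letter is not the one used in \cite{OS2}, and you would need to check separately that it preserves $\PP[1]$ (Britton's lemma gives $\WP$, but controlling roots across the HNN structure requires more care). The paper's substitution map has this verified as Lemmas~10--11 of \cite{OS2}. This is fixable; the missing Collins step is the essential gap.
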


\begin{remark}\label{rem_computable}
In this remark we recall the main steps of the proof of Theorem \ref{OS2}.
This task is justified by the additional observation that the embedding $\varphi: G\rightarrow \overline{G}$ constructed in the proof of this theorem is {\it computable}.
This means that there exists an algorithm, which given
$i\in \mathbb{N}$, expresses $x_i$ as a word in $y_1$ and $y_2$.
Furthermore, these steps will be also used in arguments of Sections~4 and 5.

\medskip

{\it Four steps in the construction of Ol'shanskii and Sapir.}
Before we start, observe that any countable group $G=\langle x_1,x_2,\dots \,|\, R \rangle$ with solvable power problem has solvable word problem, hence it admits a recursive presentation. Thus, we may assume that the given presentation of $G$ is recursive. Moreover, the solvability of power problem implies the solvability of order
problem (there exists an algorithm which compute orders of elements).

\medskip

{\it Step 1.} In~\cite{Col_1}, Collins noticed that if $H$ is a recursively presented group with solvable power problem and $a,b$ are two elements in $H$ of the same order, then the HNN extension $H_{a,b}=\langle H,t\,|\, t^{-1}at=b\rangle$ has solvable power problem.

Using a sequence of HNN extensions of this type, $G$ can be embedded into a recursively presented group $G_1$ with solvable power problem where every two elements of the same order are conjugate.
Thus the conjugacy problem in $G_1$ is decidable. Moreover, the constructed embedding $\varphi_1:G\rightarrow G_1$ is computable.

\medskip

{\it Step 2.} In~\cite{O1}, Olshanskii suggested the following construction for embedding of countable groups into
2-generated groups.
Let $H=\langle x_1,x_2,\dots \, |\, \mathcal{R}\rangle$ be any countable group. Denote by $\mathcal{R}_1$ the set of words in the alphabet $\{a,b\}$ obtained by substituting the word
$$
A_i=a^{100}b^ia^{101}b^i\dots a^{199}b^i
$$
for every $x_i$ in every word from $\mathcal{R}$. It was shown in~\cite{O1} that the map $x_i\mapsto A_i$, $i\in \mathbb{N}$, extends to an embedding of $H$ into $H_1=\langle a,b\,|\, \mathcal{R}_1\rangle$.
Lemmas 10 and 11 from~\cite{OS2} say that if the group $H$ has decidable word or conjugacy problem or power problem, then the same problem is decidable for the group $H_1$.

Applying this construction, we obtain a computable embedding $\varphi_2:G_1\rightarrow G_2$,
where $G_2=\langle a,b\,|\, R_2\rangle$ is 2-generated, recursively presented, and has solvable power and conjugacy problems.

\medskip

{\it Step 3.} Lemma 12 from~\cite{OS2} says that this $G_2$ can be embedded into a finitely presented group
$G_3=\langle a,b, c_1,\dots ,c_n\,|\, R_3\rangle$ with solvable power and conjugacy problems.
This embedding extends the identity map $a\mapsto a$, $b\mapsto b$.

The corresponding embedding was first described in~\cite{OS1}.
We indicate that $G_2$ and $G_3$ play the roles of $K$ and $H$ in \cite{OS1}.
It is worth mentioning that in \cite{OS1} the set of generators
of $H$ consists of $k$-letters, $a$-letters, $\theta$-letters and $x$-letters.
The subgroup $K$ in $H$ is generated by a subset $\mathcal{A}(P_1 )$ of the set of $a$-letters.
By Lemma~3.9 of \cite{OS1} if
$K = \langle a_1 ,\ldots ,a_m\, |\, \mathcal{R} \rangle$, then
the map $\phi_H$ defined by $a_i \rightarrow a_i (P_1)$ extends to an embedding of $K$ into $H$.
This map is obviously computable.

\medskip

{\it Step 4.} Using the construction from Step 2 once more, we embed $G_3$ into a 2-generated finitely presented group $\overline{G}=\{y_1,y_2\,|\, \overline{R}\}$ with solvable conjugacy and power problems.

Since the embeddings at all steps are computable, their composition $\varphi:G\rightarrow \overline{G}$ is computable
as well.
\end{remark}


\medskip

{\it Proof of Theorem A.}
By Proposition~\ref{four}, there is a recursively presented group $G$ with decidable $\PP[1]$ and undecidable $\PP[2]$. Using Theorem~\ref{OS2} and Remark~\ref{rem_computable}, we obtain a computable embedding $\varphi:G\rightarrow \overline{G}$,
where $\overline{G}$ is finitely presented and has decidable $\PP[1]$.
Since $\varphi$ is computable, the undecidability of $\PP[2]$ for $G$ implies
the undecidability of $\PP[2]$ for $\overline{G}$.

Indeed, consider an arbitrary equation $g_0=g_1^xg_2^y$ with $g_0,g_1,g_2\in G$ written as words in the generators of $G$.
Using computability of $\varphi$, we can write $\varphi(g_0)$, $\varphi(g_1)$, $\varphi(g_2)$ as words in the generators of $\overline{G}$.
The equation $\varphi(g_0)=\varphi(g_1)^x\varphi(g_2)^y$ has the same solutions as the original one.
If we could decide whether this equation is solvable, we could decide whether the original equation is solvable.
However $\PP[2]$ is undecidable for $G$.
Hence it is undecidable for $\overline{G}$.\hfill $\Box$

\begin{remark}
The Knapsack counterpart of $\PP [2]$ is also undecidable in $\overline{G}$.
\end{remark}

\section{Estimation functions for solutions of exponential equations}

Let $G$ be a group generated by a set $X$.
For any finite tuple $\bar{g}=(g_0,\dots,g_n)$ of elements of $G$,  the $\infty$-{\it norm} of this tuple is the number
$$
\| \bar{g}\|_X=\max\{|g_0|_X,\dots,|g_n|_X\}.
$$
In the case where $G=\mathbb{Z}$ and $X=\{1\}$ we omit $X$ and write $\|\bar{g}\|$.

\medskip

\begin{definition}~\label{estimation functions} Let $G$ be a group generated by a set $X$.
A function $f:\mathbb{N}\rightarrow \mathbb{N}$ is called
a  $\PP[n]$-{\em bound} for $G$ (with respect to $X$)
if for any exponential equation $g_0 = g^{z_1}_{1} \cdot \ldots \cdot g^{z_n}_n$ over $G$ with nonempty set of solutions,
there exists a solution $\bar{k}=(k_1,\dots ,k_n)$ with
$$
\|\bar{k}\|\leqslant f(\|\bar{g}\|_X).
$$
\end{definition}


\begin{remark}
Let $G$ be a group and let $X$ and $Y$ be two generating sets of $G$. Suppose that
$$
\underset{y\in Y}{\sup} |y|_X<\infty.
$$
If there exists a (recursive) $\PP[n]$-bound for $G$ with respect to $X$,
then there exists a (recursive) $\PP[n]$-bound for $G$ with respect to $Y$.
\end{remark}

The following lemma relates decidability of $\PP[n]$ in $G$ and existence of a {\it total recursive} $\PP [n]$-bound.
It is a counterpart of the fact that a group $G$ with a finite generating set $X$ has solvable $\WP$ if and only if the Dehn function of $G$ with respect to $X$ is computable.

\begin{lemma} \label{computable}
Let $G$ be a group generated by a finite set $X$.
For any $n\in \mathbb{N}$ the following two conditions are equivalent.
\begin{enumerate}
\item[{\rm (1)}] $\PP[n]$ is decidable in $G$.

\item[{\rm (2)}] $\WP(G)$ is decidable and there exists
a total recursive $\PP [n]$-bound for $G$ with respect to $X$.
\end{enumerate}
\end{lemma}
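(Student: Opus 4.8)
The plan is to prove the two implications separately. The implication $(2)\Rightarrow(1)$ is the more routine one, while $(1)\Rightarrow(2)$ contains the main idea; note that finiteness of $X$ will be used essentially in the second implication.

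For $(2)\Rightarrow(1)$, suppose $\WP(G)$ is decidable and $f$ is a total recursive $\PP[n]$-bound for $G$ with respect to $X$. Given an exponential equation $g_0=g_1^{z_1}\cdots g_n^{z_n}$ with each $g_i$ presented by a word $w_i$ in $X\cup X^{-1}$, I would first compute $\|\bar g\|_X$: since $X$ is finite and $\WP(G)$ is decidable, one computes $|g_i|_X$ by enumerating words of non-decreasing length and testing equality with $w_i$ in $G$, and then takes the maximum over $i$. (Alternatively, one replaces $f$ by $m\mapsto\max_{j\le m}f(j)$, which is still a total recursive $\PP[n]$-bound and is non-decreasing, and then uses the crude value $\max_i|w_i|$ in place of $\|\bar g\|_X$.) Set $N=f(\|\bar g\|_X)$. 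By the definition of a $\PP[n]$-bound, the equation has a solution if and only if it has one $\bar k\in\mathbb{Z}^n$ with $\|\bar k\|\le N$; there are only $(2N+1)^n$ such tuples, and for each we test whether $g_0^{-1}g_1^{k_1}\cdots g_n^{k_n}=1$ in $G$ using decidability of $\WP(G)$. The equation is solvable precisely when some test succeeds, so $\PP[n]$ is decidable.

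For $(1)\Rightarrow(2)$, suppose $\PP[n]$ is decidable. First, $\WP(G)$ is decidable, because a word $w$ represents $1$ in $G$ if and only if $(w,e,\dots,e)\in\PP[n]$ (the equation $w=e^{z_1}\cdots e^{z_n}$ is solvable exactly when $w=1$). For the bound, observe that since $X$ is finite, for each $m\in\mathbb{N}$ there are only finitely many tuples $\bar g=(g_0,\dots,g_n)\in G^{n+1}$ with $\|\bar g\|_X\le m$, and a complete list of words representing them can be produced effectively. For each such $\bar g$ I decide, using the algorithm for $\PP[n]$, whether the associated equation is solvable; if it is, I enumerate the tuples $\bar k\in\mathbb{Z}^n$ in order of non-decreasing norm and test each of them as above, using the decision procedure for $\WP(G)$. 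This search halts because a solution exists, and I record the norm $s(\bar g)$ of the first solution found. Finally put
\[
f(m)=\max\bigl\{\,s(\bar g)\ :\ \|\bar g\|_X\le m\ \text{ and the equation for }\bar g\text{ is solvable}\,\bigr\},
\]
with $f(m)=0$ when this set is empty. By construction $f$ is total and recursive, and for any solvable exponential equation with coefficient tuple $\bar g$, taking $m=\|\bar g\|_X$ gives $f(m)\ge s(\bar g)$, so the solution produced by the search has norm at most $f(\|\bar g\|_X)$. Hence $f$ is a total recursive $\PP[n]$-bound.

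The crux is the observation used in $(1)\Rightarrow(2)$: decidability of $\PP[n]$ together with decidability of $\WP(G)$ allows one not merely to decide solvability but to \emph{produce} a solution, via an unbounded search that is guaranteed to terminate; combined with the uniform finiteness coming from $X$ being finite, this yields a computable bound. The remaining points — computing $\|\bar g\|_X$ from a representing word, or monotonizing $f$ — are minor bookkeeping, and no single step presents a genuine obstacle.
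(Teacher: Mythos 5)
Your proposal is correct and follows essentially the same route as the paper: for $(1)\Rightarrow(2)$ one enumerates the finitely many coefficient tuples of norm at most $m$, decides solvability with the $\PP[n]$-oracle, finds a solution by unbounded search using $\WP(G)$, and takes the maximum of the norms; for $(2)\Rightarrow(1)$ one tests all exponent tuples up to the recursive bound using $\WP(G)$. Your extra remarks (computing $\|\bar g\|_X$ from representing words, reducing $\WP$ to $\PP[n]$ via the tuple $(w,e,\dots,e)$) only spell out details the paper leaves implicit.
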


\begin{proof}
$(1)\Rightarrow (2)$.
Suppose that $\PP[n]$ is decidable in $G$.
Then, clearly $\WP(G)$ is decidable.
Now we define the desired function
$f:\mathbb{N}\rightarrow \mathbb{N}$ at arbitrary point $m\in \mathbb{N}$
in four steps.

\begin{enumerate}
\item[1)] Let $B(m)$ be the set of all tuples $\bar{g}=(g_0,g_1,\dots,g_n)$ of words in the alphabet $X$ satisfying
$\|\bar{g}\|_X\leqslant m$.
Since $X$ is finite, the set $B(m)$ is finite and we can compute it.

\item[2)] Let $B(m)'$ be the subset of $B(m)$ consisting of the tuples $\bar{g}=(g_0,g_1,\dots,g_n)$ for which that the equation $g_0 = g^{x_1}_{1} \cdot \ldots \cdot g^{x_n}_n$ has a solution.
We can compute $B(m)'$ using decidability of $\PP[n]$ in $G$.

\item[3)] For each tuple $\bar{g}\in B(m)'$ we can find some solution $\bar{k}=(k_1,\dots,k_n)$ of the equation $g_0 = g^{z_1}_{1} \cdot \ldots \cdot g^{z_n}_n$ using
an effective enumeration of $n$-tuples of integers
and the decidability of $\WP(G)$.
We denote this solution by $\bar{k}(\bar{g})$.

\item[4)] Finally we set $f(m)$ to be the maximum of
$\|\bar{k}(\bar{g})\|$ over all
$g\in B(m)'$ if $B(m)'\neq \emptyset$, and we set $f(m)=1$
if $B(m)'=\emptyset$.
\end{enumerate}

\noindent
The function $f$ is total and recursive, and satisfies Definition~\ref{estimation functions}.

\medskip

$(2)\Rightarrow (1)$.
Consider an exponential equation
$g_0 = g^{z_1}_{1} \cdot \ldots \cdot g^{z_n}_n$ over $G$.
To decide, whether this equation has a solution, we verify whether
the equality $g_0 = g^{k_1}_{1} \cdot \ldots \cdot g^{k_n}_n$ holds for at least one tuple $\bar{k}=(k_1,\dots,k_n)\in \mathbb{Z}^n$ with $\|\bar{k}\|\leqslant f(\|\bar{g}\|_X)$.
The verification for a concrete tuple $\bar{k}$ can be done using $\WP(G)$.
 \end{proof}

\begin{remark} In~\cite{Kharlampovich}, Kharlampovich constructed a group $G$ which is finitely presented in the variety $x^m=1$
and has undecidable word problem.
By Lemma~\ref{computable}, $\PP[n]$ is undecidable for each $n$.
On the other hand the constant function $f(k) = m$, $k\in \mathbb{N}$,
is a total recursive $\PP [n]$-bound for $G$.
\end{remark}


The following proposition shows that there is a finitely presented group
with decidable $\PP[1]$ which does not have a primitively recursive $\PP[1]$-bound.

\begin{proposition}\label{unsolvable}
There exists a finitely presented group $G = \langle X |\, \mathcal{R}\rangle$ with decidable $\PP[1]$
and there exists a collection of elements $(c_n)_{n\in \mathbb{N}}$ of $G$ such that
the following holds.
\begin{enumerate}
\item[\rm (1)] For any $n$ the equation $c_1=c_n^x$ has a unique solution, say $k_n$; this solution is positive.

\item[\rm (2)] There is no primitively recursive function $f$ such that $k_n\leqslant f(\max\{|c_n|_X,|c_1|_X\})$.
\end{enumerate}

\end{proposition}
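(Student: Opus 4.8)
The plan is to realize a group that contains ``named'' elements $c_n$ whose only logarithmic relation among themselves is $c_1=c_n^{k_n}$, where the exponents $k_n$ grow faster than any primitive recursive function, while keeping $\PP[1]$ decidable. The natural starting point is an auxiliary recursively presented group $G_0$ built from a fixed non--primitive--recursive but total recursive function. Concretely, fix a total recursive function $g:\mathbb{N}\to\mathbb{N}$ that dominates every primitive recursive function (e.g.\ an Ackermann--type function), and let $G_0$ be a free product $\ast_{n\in\mathbb{N}} \langle a_n\rangle$ of infinite cyclic groups together with HNN--type stable letters or, more simply, a presentation on generators $\{c_n\}_{n\in\mathbb{N}}$ with defining relations $c_1=c_n^{\,g(n)}$ for $n\geqslant 2$. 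This $G_0$ is recursively presented; it is actually a right--angled--Artin--like or graph--of--groups construction with vertex groups $\mathbb{Z}$, so its word problem (indeed its power problem) is decidable by the standard normal form. The element $c_n$ has length $1$ in the natural generating set, so $|c_n|_X$ stays bounded while the unique solution of $c_1=c_n^x$ is $k_n=g(n)$, which by choice of $g$ cannot be bounded by any primitive recursive function of $\max\{|c_n|_X,|c_1|_X\}=1$. (One must make the coefficients grow in length to avoid the trivial obstruction that a bound on inputs of constant size is just a constant; so in fact I would instead index so that $|c_n|_X\approx n$, by taking $c_n$ to be a word of length $\sim n$, for instance $c_n = c_1^{1/g(n)}$ realized via a chain of $n$ root extractions, and arrange $k_n = g(n)$ to still grow faster than any primitive recursive function of $n$. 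The combinatorics of this padding is routine.)

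Next I would verify decidability of $\PP[1]$ for $G_0$. For an equation $u=v^z$ in $G_0$, put both sides in free--product normal form; if the syllable lengths force $z$ to lie in a bounded range (as in Claim~3 of Proposition~\ref{four}, via Lemmas~\ref{one} and~\ref{two}) we are done by the word problem. The only genuinely new feature is the infinite family of root relations $c_1=c_n^{g(n)}$, but since $g$ is \emph{recursive} we can, given any element written as a word, determine which relations are relevant and compute the finitely many possibilities for $z$. So $\PP[1]$ is decidable for $G_0$, while by Lemma~\ref{computable} there is no primitive recursive $\PP[1]$--bound (because such a bound would in particular bound the $k_n$).

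Then I would apply the embedding machinery of Theorem~\ref{OS2} together with the computability observation of Remark~\ref{rem_computable}: embed $G_0$ computably into a $2$--generated finitely presented group $G$ with decidable conjugacy and power problems, and set $c_n:=\varphi(c_n^{(0)})$ viewed as explicit words in the two generators of $G$. Decidability of $\PP[1]$ for $G$ is given by the theorem. The equation $c_1=c_n^x$ in $G$ has exactly the solutions it had in $G_0$ (injectivity of $\varphi$), so its unique solution is still $k_n=g(n)$, which is positive, establishing (1). For (2): the words $\varphi(c_n^{(0)})$ have length at most $C\cdot(\text{length of }c_n^{(0)})$ for a computable constant $C$ (in fact, after Step~2 of the Olshanskii--Sapir construction, length grows at most linearly with a logarithmic--type overhead, which is still primitive recursive in the input length); hence $|c_n|_X$ is bounded by a primitive recursive function $p(n)$. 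If some primitive recursive $f$ satisfied $k_n\leqslant f(\max\{|c_n|_X,|c_1|_X\})$, then $g(n)=k_n\leqslant f(p(n))$, and $f\circ p$ is primitive recursive, contradicting the choice of $g$.

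The main obstacle is the length--distortion bookkeeping through the four embedding steps: I need the composite embedding $\varphi$ to be not merely computable but to distort lengths by at most a primitive recursive (indeed roughly polynomial) amount, so that a primitive recursive $\PP[1]$--bound for $G$ would pull back to a primitive recursive bound for $G_0$. Step~2 (the $x_i\mapsto A_i$ substitution, $|A_i|=\Theta(i)$) and Step~1 (adding finitely... infinitely many stable letters) are the places to check; one resolves this by choosing the indexing of the $c_n^{(0)}$ so that already in $G_0$ the length $|c_n^{(0)}|$ is large enough ($\gtrsim n$) that the bounded polynomial blow--up of the embedding is harmless, while keeping $g$ fast--growing enough that $g(n)$ still outruns every primitive recursive function of $|c_n|_X$. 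Once the distortion estimate is pinned down, everything else is an assembly of results already available in the excerpt.
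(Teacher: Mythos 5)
Your proposal has the same skeleton as the paper's proof: a recursively presented seed group containing elements $c_n$ with $c_1=c_n^{k_n}$ and fast-growing exponents $k_n$, followed by the computable Ol'shanskii--Sapir embedding of Remark~\ref{rem_computable}, with the crucial quantitative input being that the image of the $n$-th distinguished generator has length bounded by an explicit (primitive recursive, in fact linear) function of $n$. The differences lie in the two main ingredients. For the seed group the paper takes $\mathbb{Q}$, presented as $\langle c_1,c_2,\dots\mid c_n^{F(n)}=c_1,\ [c_n,c_m]=1\rangle$, so decidability of $\PP[1]$ and of conjugacy are immediate and Steps 1 and 4 of the construction can be skipped; your noncommutative amalgam $\langle c_1,c_2,\dots\mid c_n^{g(n)}=c_1\rangle$ would also work, but then decidability of the power problem (which is what Theorem~\ref{OS2} requires as input) is a genuine verification you only assert and sketch, and if you do not also settle its conjugacy problem you must run Step 1 and then track the re-indexing of generators that enters Step 2. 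For the growth the paper uses an explicit diagonal $F(n)=n!\,(f_n(100n+14500)+1)$ built from nondecreasing majorants $f_n$ of an enumeration of the primitive recursive functions, evaluated exactly at the length bound $100n+14500$ coming from $c_i\mapsto a^{100}b^ia^{101}b^i\cdots a^{199}b^i$; you instead take an Ackermann-type dominating $g$ and argue that $g$ eventually beats $f\circ p$, where $p(n)$ bounds $|c_n|_X$. This works, but you should first replace $f$ by its monotone primitive recursive majorant before passing from $k_n\leqslant f(\max\{|c_n|_X,|c_1|_X\})$ to $k_n\leqslant f(p(n))$; the paper builds this monotonicity into the $f_n$ from the start.

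One local misstatement to repair: it is not true that $|\varphi(c_n^{(0)})|\leqslant C\cdot|c_n^{(0)}|$. In Step 2 the image of the $i$-th generator is the fixed word $A_i$ of length $100i+14500$, so the distortion of a single generator depends on its index, not on its word length; with $|c_n^{(0)}|=1$ the image still has length growing linearly in $n$. This is exactly why the paper records the estimate $|c_n|_X\leqslant 100n+14500$ (its inequality (4.1)) and feeds that specific argument into the diagonal function. Your ``padding'' parenthetical and final paragraph point in the right direction, and once the bound is stated as ``$|c_n|_X$ is bounded by a primitive recursive function of $n$'' the contradiction in (2) goes through. None of these issues is fatal, but the paper's choices ($\mathbb{Q}$ as seed group, and a diagonal function tailored to the explicit length bound) are precisely what let it avoid the verifications you label as routine.
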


\begin{proof}
We enumerate all primitively recursive functions $g_1,g_2,\dots $ and, for any~$n\in~\mathbb{N}$, we define
a function $f_n:\mathbb{N}\rightarrow \mathbb{N}$ by the rule
$$
f_n(x)=\overset{n}{\underset{i=1}{\sum}} \overset{x}{\underset{j=1}{\sum}}\, \, g_i(j),\hspace*{2mm} x\in \mathbb{N}.
$$
Clearly, $f_n$ is primitively recursive, nondecreasing, $g_n\leqslant f_n$, and $f_n\leqslant f_{n+1}$.
Finally, we define a function $F:\mathbb{N}\rightarrow \mathbb{N}$ by the rule
$$
F(n)=n! (f_n(100 n+14500)+1).
$$
Clearly, $F$ is recursive.
We also define rational numbers $c_1=1$ and $c_n=\frac{1}{F(n)}$ for $n\geqslant 2$.
Then the following recursive presentation (written multiplicatively) defines the group $\mathbb{Q}$:
$$
\langle c_1,c_2,\dots \,|\, c_n^{F(n)}=c_1,\, [c_n,c_m]=1\, (n,m\in \mathbb{N})\rangle.
$$
It is obvious that $\PP[1]$ is decidable for this presentation.
We embed $\mathbb{Q}$ into a finitely presented group $G_3$ by the Ol'shanskii -- Sapir construction which we described in
Steps 1-4 in Section 3. Note that since $\mathbb{Q}$ has decidable conjugacy problem, we do not need to do Step 1.
Thus, we start with Step 2, where we use the following map.
\begin{itemize}
\item
Let $\varphi_2$ maps each $c_i$ to the word
$a^{100}b^i a^{101}b^i\ldots  a^{199}b^i$ (of length $100i + 14500$), $i \in \mathbb{N}$.
\end{itemize}

By Step 2, $\varphi_2$ extends to an embedding $\varphi_2:\mathbb{Q}\rightarrow G_2$,
where the group $G_2=\langle a,b\,|\, R_2\rangle$ is 2-generated, recursively presented, and has solvable power and conjugacy problems.
Then we only apply Step 3. By this step the map $a\mapsto a$, $b\mapsto b$ extends
to an embedding $\varphi_3:G_2\rightarrow G_3$, where $G_3=\langle X|\, R_3\rangle$
is a finite presentation with solvable power and conjugacy problems, and $\{a,b\}\subseteq X$.
We set $G=G_3$.

The statement (1) is valid: for any $n$, the equation $c_n^x=c_1$ has a unique solution, namely $k_n=F(n)$.
To prove statement (2), we first observe that
$$
\max\{|c_n|_X,|c_1|_X\}\leqslant  \max\{|c_n|_{\{a,b\}},|c_1|_{\{a,b\}}\}{\leqslant} 100n+14500.\eqno{(4.1)}
$$
Suppose that statement (2) is not valid, i.e. there exists a primitively recursive function $g_m$ such that
$$
k_n\leqslant g_m(\max\{|c_n|_X,|c_1|_X\})\eqno{(4.2)}
$$ for any $n$.
Using that $g_m\leqslant f_m$ and that $f_m$ is nondecreasing, we deduce from (4.1) and (4.2) that
$$
F(n)\leqslant f_m(100n+14500)
$$ for any $n$.
In particular, $F(m)\leqslant f_m(100m+14500)$.
This contradicts the definition of $F$.
\end{proof}

\begin{remark}\label{complexity_remark_1}
Theorem 4.2 in \cite{LZ_2} states that the Knapsack Problem in the Baumslag -- Solitar group ${\rm BS}(1,2)$
is NP-complete but the
function estimating nonnegative solutions of exponential equations with 3 variables
cannot be essentially smaller than the exponential function
(note that in~\cite{LZ_2}, the authors use the binary representation of natural numbers).
This statement can be considered as a counterpart of Proposition~\ref{unsolvable}
at the level of polynomial computability.
\end{remark}

\begin{remark}\label{complexity_remark_2}
However for hyperbolic groups
such functions for any $n$ can be chosen to be linear (a polynomial estimation was known earlier, see~\cite{MNU}). This follows from a forthcoming paper~\cite{Bogo_Bier} of the first named author and A. Bier. It is proved in~\cite{Bogo_Bier} that similar linearity result holds for acylindrically hyperbolic groups in the case of loxodromic coefficients. Another result in~\cite{Bogo_Bier} states that there is a linear reduction to peripheral subgroups in the case of relatively hyperbolic groups.

\end{remark}

\section{Restricted versions of $\PP [n]$}

We introduce two new algorithmic problems which can be considered as fragments of $\PP[n]$.
Informally we call them the the {\it left and the right fragments} of~$\PP[n]$.
We show that these fragments can take diverse computational complexities for the same finitely presented group, see Corollary~\ref{cor1/2}.

\subsection{Definitions and observations}

Below we assume that $G$ is given by a recursive presentation and $X$ is the corresponding set of generators.

\begin{definition}\label{fragments}
(1) Let $g_1 , \ldots , g_n \in G$.
By $\PP [G, g_1 , \ldots , g_n ]$ we denote the set of all $g \in G$
such that the equation
$g = g^{z_1}_{1} \cdot \ldots \cdot g^{z_n}_n$ has a solution which is a tuple of integers. \\
(2) For a fixed $g\in G$ let $\PP [g,G^n]$
consist of all tuples $\bar{g} = (g_1 ,\ldots ,g_n )$
such that the equation
$g = g^{z_1}_{1} \cdot \ldots \cdot g^{z_n}_n$ has a solution which is a tuple of integers.
\end{definition}
Note that for a tuple of units $\bar{e}$ the membership problem
for $\PP [G, \bar{e}]$ is equivalent to the word problem.
Decidability of the problem $\PP[n]$ is a uniform form
of decidability of all $\PP [G, \bar{g}]$ (resp. $\PP [g,G^n]$).
Indeed,  if for each $g\in G$ there is an algorithm (depending on the word $g$) which decides the membership problem for $\PP [g,G^n]$, then $\PP[n]$ is decidable.
The similar statement holds for problems
$\PP [G, g_1 , \ldots , g_n ]$.

\begin{remark} Suppose that $G$ is a group given by a recursive presentation.
Let $g$ be an nontrivial element of $G$.
Suppose that $\PP [G, g]$ is decidable in $G$ and the order of $g$ is known.
Then $\WP$ is decidable in $G$.

Indeed, in order to determine whether a given $h$ is trivial in $G$, we first verify whether
$h$ is a power of $g$.
If $h$ is not a power of $g$ then $h\not= 1$.
If we know that $h$ is a power of $g$ let us start a diagonal computation for verification of the following equalities:
$h=1$, $h=g$, $\ldots, h=g^k, \ldots$ .
Here we use the recursive presentation of~$G$.
At some stage we will find a number $k$ with $h = g^k$. Since the order of $g$ is known, we can check, whether $h=1$ or not.
\end{remark}

\begin{quote}
{\em What is the relationship among  possible algorithmic complexities
of $\PP[n]$, $\PP [g,G^n]$ and $\PP [G, g_1 , \ldots , g_n ]$ for tuples
$g, g_1 , \ldots , g_n \in G$?  }
\end{quote}

\medskip

Some additional issue of this problem is nicely illustrated by  McCool's example from \cite{MC}.
Having a computable function $F$ with a non-computable $\im F$
the group
\[
\big\langle \bigcup_{n\in \mathbb{N}} \{ a_n, b_n \} \,\,|\, \{ [a_n, b_n] = 1 \, | \, n \in \mathbb{N} \} , \,\{  a_{F(n)}=b_{F(n)}^n \, | \, n\in \mathbb{N} \} \big\rangle
\]
has decidable $\WP$ but undecidable $\PP [1]$.
On the other hand each $\PP [g_0 ,G]$ (resp. $\PP [G, g_0 ]$) is a decidable problem whenever the set
\[
\{ n \in \im\, F \, | \, a_n \mbox{ or } b_n \mbox{ occurs in } g_0\}
\,
\]
is provided.
This claim can be easily verified (for example see arguments
in the proof of Theorem \ref{1/2} below).

\subsection{Example}

Let $p_n$ denote the $n$-th prime number. For any function $F:\mathbb{N}\rightarrow \mathbb{N}^2$ and any $n\in \mathbb{N}$ we denote
$(\im F)_n = \{ m\in \mathbb{N}\, |\, (n ,m)\in \im (F)\}$ and write $F=(F_1,F_2)$.

\medskip

Let $F:\mathbb{N}\rightarrow \mathbb{N}^2$ be a total, injective, computable function such that for any $n\in \mathbb{N}$ we have either $(\im F)_n = \emptyset$  or
$$
p_n\in (\im F)_n\subseteq\{ p^k_{n}\, |\, k\in \mathbb{N} \setminus \{ 0 \}\}.
$$
Thus all sets $(\im F)_n$, $n\in \mathbb{N}$, are pairwise disjoint.
We put
\[
X= \{ a_n\, |\, n\in \mathbb{N}\} \cup \{ b_m\, |\, m \mbox{ is a power of a prime number}, m\neq 1\}
\]
and consider the group with the following recursive presentation

\[
G=\big\langle X\,\,|\, \{ [a_n, b_m]=1 \, | \, \exists\, k \, (m = p^k_n) \}
, \, a_{F_1(n)}=b_{F_2(n)}^n \, | \, n\in \mathbb{N}\big\rangle.\eqno{(5.1)}
\]

\begin{theorem} \label{1/2}
For the above defined group $G$ the following statements are valid.
\begin{enumerate}
\item[{\rm (1)}] $\CP(G)$ is decidable. In particular, $\PP[e,G]$ and $\PP[G,e]$ are decidable.\vspace*{1mm}

\item[{\rm (2)}] $\PP[1]$ is undecidable for $G$ if the set $\im\, (F_1 )$ is not computable.\vspace*{1mm}

\item[{\rm (3)}] For any fixed $g_0\in G$ the problem $\PP [g_0 ,G]$
(resp. $\PP [G,g_0 ]$) is decidable or there is a number $n$
such that $\PP [g_0 ,G]$ (resp. $\PP [G,g_0 ]$) is Turing reducible to $(\im\, F)_n$.
Each of these possibilities can be effectively recognized and the corresponding number $n$ can be computed.\vspace*{1mm}

\item[{\rm (4)}] If $n\in \im\, F_1$ then the problem $\PP[a_{n},G ]$ is computably equivalent to the membership problem for $(\im F)_{n}$.
\end{enumerate}
\end{theorem}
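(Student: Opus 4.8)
For part~(4) the plan is to establish both directions of a Turing equivalence, $\PP[a_n,G]\leqslant_T(\im F)_n$ and $(\im F)_n\leqslant_T\PP[a_n,G]$, after setting up a free product decomposition of $G$. First I would observe that presentation~(5.1) splits over the primes: assigning $a_k$ and every $b_{p_k^t}$ ($t\geqslant1$) to the ``$k$-th block'', each generator lies in exactly one block and every defining relation lies inside a single block, since the commutator relations relate $a_k$ only with the $b_{p_k^t}$, and the relation $a_{F_1(j)}=b_{F_2(j)}^{\,j}$ stays in block $F_1(j)$ because $F_2(j)\in(\im F)_{F_1(j)}\subseteq\{p_{F_1(j)}^t\mid t\geqslant1\}$. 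Hence $G=\ast_{k\in\mathbb{N}}G_k$, where $G_k$ is generated by $\{a_k\}\cup\{b_{p_k^t}\mid t\geqslant1\}$ with the relations of block~$k$. Fix $n\in\im F_1$ and write $a=a_n$, $\beta_t=b_{p_n^t}$, $S=(\im F)_n$; note $S\neq\emptyset$ and $p_n\in S$. In $G_n$ the element $a$ is central; let $\pi\colon G_n\twoheadrightarrow\bar G_n:=G_n/\langle a\rangle$ be the quotient by $\langle a\rangle$. Then $\bar G_n=\ast_{t\geqslant1}C_t$ is a free product of cyclic groups with $C_t=\mathbb{Z}/j(t)$ when $p_n^t\in S$, where $j(t)=F^{-1}(n,p_n^t)$ is well defined by injectivity of $F$, and $C_t=\langle\beta_t\rangle\cong\mathbb{Z}$ otherwise. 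Sending $a\mapsto1$, $\beta_t\mapsto1/j(t)$ for $p_n^t\in S$ and $\beta_t\mapsto0$ otherwise defines a homomorphism $G_n\to\mathbb{Q}$, so $\langle a\rangle\cong\mathbb{Z}$.

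Next I would extract the membership criterion. Working with normal forms in $G=\ast_kG_k$: if $a=g^z$ with $z\neq0$, then $g$ must be conjugate into some free factor $G_m$ (otherwise $g$, hence $g^z$, would be conjugate to a cyclically reduced word of syllable length $\geqslant2$, impossible since $g^z=a\in G_n$); writing $g=uhu^{-1}$ with $h\in G_m$ gives $u^{-1}au=h^{z}\in G_m$, which by the standard intersection property of free factors forces $m=n$ and $u\in G_n$, so $g\in G_n$; and $z=0$ is impossible as $a\neq1$. Inside $G_n$: from $a=g^z$ we get $\pi(g)^z=1$, so $\pi(g)$ has finite order $d$ (with $d\mid z$); then $g^d\in\ker\pi=\langle a\rangle$, say $g^d=a^s$, and comparing with $a=(g^d)^{z/d}=a^{sz/d}$ (using that $a$ has infinite order) forces $s=\pm1$ and $z/d=\pm1$. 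Conversely $g^d=a^{\pm1}$ gives $a\in\langle g\rangle$. Thus $g\in\PP[a_n,G]$ iff $g\in G_n$, $\pi(g)$ has finite order $d$ in $\bar G_n$, and $g^d=a^{\pm1}$ in $G_n$.

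For $\PP[a_n,G]\leqslant_T(\im F)_n$: given a word $g$, use $\WP(G)$ (decidable by part~(1)) to compute its reduced free-product form and decide whether $g\in G_n$; if not, answer no. If $g\in G_n$, form the word $\pi(g)$ in the $\beta_t$; for each of the finitely many $t$ with $\beta_t$ occurring in it, ask the oracle whether $p_n^t\in(\im F)_n$, and when the answer is yes compute $j(t)$ by unbounded search through $F$. This pins down the finitely many relevant factors $C_t$ of $\bar G_n$, hence whether $\pi(g)$ has finite order and, if so, its order $d$; finally test $g^d=a$ and $g^d=a^{-1}$ with $\WP(G)$ and answer accordingly. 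For $(\im F)_n\leqslant_T\PP[a_n,G]$: given $m$, answer no unless $m=p_n^t$ for some $t\geqslant1$; otherwise query whether $b_m\in\PP[a_n,G]$ and return that answer. This is correct: if $p_n^t\in(\im F)_n$ the relation $a=\beta_t^{\,j(t)}$ shows $b_m=\beta_t\in\PP[a_n,G]$, while if $p_n^t\notin(\im F)_n$ then $\pi(\beta_t)$ generates an infinite cyclic free factor of $\bar G_n$, hence has infinite order and $\beta_t\notin\PP[a_n,G]$ by the criterion; in fact this reduction is many-one.

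The main obstacle I expect is the structural part of the first two paragraphs: verifying that~(5.1) is exactly the free product $\ast_kG_k$ with the stated description of $G_n$, identifying $\langle a\rangle\cong\mathbb{Z}$ as the kernel of $\pi$, and isolating the clean ``root criterion'' for $a$. Once these are in place the two reductions are routine, the decisive point being that testing whether $\pi(g)$ has finite order only involves the finitely many factors $C_t$ whose index $t$ already appears in $g$, so finitely many queries to $(\im F)_n$ suffice.
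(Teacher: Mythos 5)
Your proposal addresses only statement (4) of the theorem; nothing is said about statements (1)--(3). This is a genuine gap with respect to the statement as posed: (1) requires proving decidability of $\WP(G)$ and $\CP(G)$ (the paper does this via the free product decomposition $G=\ast_i H_i$, the further amalgamated decomposition $H_i=(\langle a_i\rangle\times H_i^-)\ast_{\langle a_i\rangle}H_i^+$, the Observation characterizing when $b_j^k$ is a power of $a_i$, and the conjugacy criterion for amalgamated products from Lyndon--Schupp); (2) is short but still needs an argument; and (3) requires a uniform case analysis on normal forms together with the claim that one can \emph{effectively recognize} which alternative holds and compute the relevant $n$ --- none of this appears in your text. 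Moreover, your argument for (4) explicitly invokes ``$\WP(G)$ (decidable by part (1))'', so even within its scope it is conditional on a part you have not proved.

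Within its scope, however, the argument for (4) is correct and takes a genuinely different structural route from the paper. Where the paper works with the amalgamated product (5.3) and normal forms $a_n^s b_{i_1}^{s_1}\cdots b_{i_p}^{s_p}$ (the reduction $\PP[a_n,G]\leqslant_T(\im F)_n$ being extracted from the proof of statement (3), while the displayed equivalence in the proof of (4) gives only the converse reduction), you instead pass to the central quotient $\pi\colon G_n\to G_n/\langle a_n\rangle$, identify the quotient as a free product of cyclic groups, certify $\ker\pi=\langle a_n\rangle\cong\mathbb{Z}$ via the homomorphism to $\mathbb{Q}$, and reduce $a_n=g^z$ to the clean criterion ``$g\in G_n$, $\pi(g)$ has finite order $d$, and $g^d=a_n^{\pm1}$''. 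The free-product facts you invoke (conjugation into factors, $u^{-1}G_nu\cap G_m\neq 1\Rightarrow m=n,\ u\in G_n$) are standard, the oracle is queried only for the finitely many indices $t$ occurring in the input word, and the search for $j(t)$ terminates because $F$ is total and computable; the reverse reduction ($b_{p_n^t}\in\PP[a_n,G]$ iff $p_n^t\in(\im F)_n$) matches the paper's. So the verdict is: a correct and somewhat cleaner self-contained proof of (4) modulo decidable word problem, but not a proof of the theorem, since (1)--(3) are missing.
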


\begin{proof}

Before we start to prove these statements, we establish the structure of $G$.
We decompose $X=\underset{i\in \mathbb{N}}\cup X_i$, where
$$
X_i=\{a_i\} \cup \{ b_j \, | \, j \mbox{ is a power of } p_i \}.
$$
Let $H_i$ be the subgroup of $G$ generated by $X_i$. Then
\[
G=\underset{i\in \mathbb{N}}{\ast} H_i. \eqno{(5.2)}
\]

\noindent
To describe the structure of $H_i$, we first introduce the following subgroups of $H_i$:
$$
H_i^{-}=\langle b_j\,|\, j \mbox{ is a power of } p_i \mbox{ satisfying } j\notin (\im F)_i\rangle,
$$
$$
H_i^{+}=\langle b_j\,|\, j \mbox{ is a power of } p_i \mbox{ satisfying } j\in (\im F)_i\rangle.
$$
Then $H_i^{-}$ is the free product of all its subgroups $\langle b_j\rangle$, and $H_i^{+}$ is the amalgamated product over $\langle a_i\rangle$ of all its subgroups $\langle b_j\rangle$.
Moreover, we have
\[
H_i=(\langle a_i\rangle\times H_i^{-})\underset{\langle a_i\rangle}{\ast} H_i^{+}.\eqno{(5.3)}
\]
Note that $\langle a_i\rangle$ is the center of $H_i$.

\medskip

Before we start the proof of statement (1), we make the following important observation.

{\bf Observation.} Let $i,j\in \mathbb{N}$ and $k\in \mathbb{Z}$. Then $b_j^{k}$ is a power of $a_i$ if and only if $j$ is a power of $p_i$ and there exists a positive divisor $d$ of $k$ such that $F(d)=(i,j)$.\break
We can recognize the existence of such $d$ since $F$ is computable. If such $d$ exists, then $a_i=b_j^d$ and hence $a_i^{k/d}=b_j^k$.

\medskip

{\it Proof of statement} (1). First we prove that $\WP(G)$ is decidable.
Using the normal form of an element of the free product (5.2), we reduce this problem to the following one.
Given $i\in \mathbb{N}$ and given a cyclically reduced nonempty
word $a^s_i w(\bar{b})$, where $w(\bar{b})$ is over
$X_i \setminus \{ a_i \}$, decide whether the corresponding element of $H_i$ is trivial or not.

We may assume that the word $w(\bar{b})$ is nonempty.
Indeed, otherwise $a^s_i w(\bar{b})$ lies in the cyclic subgroup
$\langle a_i \rangle$ of $H_i$ and therefore is trivial exactly when $s=0$.

Using the above observation, we verify whether some subword $b_j^k$ of $w(\bar{b})$ is a power of $a_i$ or not.
If no one such subword is a power of $a_i$, then the element $a^s_i w(\bar{b})$ is nontrivial in the amalgamated product (5.3).
Suppose that some subword $b_j^k$ of $w(\bar{b})$ is a power of $a_i$, say $a_i^{\ell}=b_j^k$.
Since $a_i$ lies in the center of $H_i$, we can move this subword to the left and adjoin to $a^s$.
After this operation $|w(\bar{b})|_{X_i \setminus \{ a_i \}}$ decreases and we can proceed by induction.

\medskip

Now we show that the conjugacy problem in $G$ is decidable.
Using (5.2), we reduce this problem to the conjugacy problem in the groups $H_i$, $i\in \mathbb{N}$. By (5.3), each $H_i$ is an amalgamated product over the center of $H_i$.
This fact, the decidability of $\WP(G)$, and a criterium for conjugacy of elements in amalgamated products (see~\cite[Chapter IV, Theorem 2.8]{LS}), imply that there is a universal algorithm deciding the conjugacy problem in each $H_i$ and hence in $G$.

\medskip

{\it Proof of statement} (2). This statement easily follows from the the equivalence
\[
n\in \im F_1\,\, \Longleftrightarrow\,\,\, a_n \mbox{ is a power of  }b_{p_n}.
\]
Indeed, if $\im F_1$ is not computable, we cannot decide,
given $n\in \mathbb{N}$, whether
the equation $a_n=b_{p_n}^x$ has a solution or not.

\medskip

{\it Proof of statement} (3). For a fixed element $g_0\in G$ we study the problem $\PP [g_0 ,G]$.
Given another element $g_1\in G$, we shall consider the exponential equation
\[
g_0=g_1^z.
\]
We may assume that $g_0\neq 1$, otherwise $\PP[g_0,G]$ is decidable since $G$ is torsion-free and $\WP(G)$ is decidable. Having $g_0\neq 1$, we may assume that $g_1\neq 1$. Standardly, we assume that $g_0$ and $g_1$ are represented by some words $u$ and $v$ in the alphabet $X=\underset{i\in \mathbb{N}}{\cup} X_i$.
Thus, we consider the following exponential equation in $G$:
$$
u=v^z.\eqno{(5.4)}
$$
We write $u=u_1u_2\dots u_k$, where $u_i$ is a word in the alphabet $X_{\lambda (i)}$ for some $\lambda(i)$, $i=1,\dots,k$,
and $\lambda(j)\neq \lambda(j+1)$ for $j=1,\dots ,k-1$.
Moreover, we assume that each $u_i$ represents a nontrivial element of $H_{\lambda (i)}$.
This can be recognized by decidability of $\WP(G)$.
Using conjugation, we may additionally assume that
$\lambda(1)\neq \lambda(k)$ if $k>1$.
Analogously, we write $v=v_1v_2\dots v_{\ell}$.

Suppose that $k>1$. Then the necessary condition for solvability of equation (5.4) is $\ell >1$
and $v_1$ and $v_{\ell}$ belong to different subgroups $H_i$ (determined by $u_1$ and $u_k$).
If this condition is fulfilled, then any possible solution $z$ of equation (5.4) satisfies
$|z|=k/\ell$, and the existence of a solution $z$ can be verified using decidability of $\WP(G)$.

Note that until this moment the corresponding algorithm is uniform on $u$ and~$v$.
In the following case we will call some oracle depending on $u$.

Suppose that $k=1$.
Then $u\in H_n$ for $n=\lambda(1)$, and this $n$ can be determined using the definition of $X_n$.
Using the procedure described in the proof of $\WP(G)$,
we write $u$ in the normal form with respect to the amalgamated product (5.3), i.e. we write
$u = a^s_n b^{s_1}_{i_1} \ldots b^{s_p}_{i_p}$
where, in particular, each $b^{s_1}_{i_1}, \ldots , b^{s_p}_{i_p}$
does not have a subword which is a power of $a_n$. Conjugating, we may additionally assume that $u$ is cyclically reduced in the sense that $i_1\neq i_p$ if $p>1$.

Furthermore, we may now assume that $v$ belongs to $H_{n}$ too. We also
write $v$ in the normal form with respect to the amalgamated product (5.3),
$v = a^t_n b^{t_1}_{j_1} \ldots b^{t_q}_{j_q}$

If $p >1$, then the necessary condition for solvability of equation (5.4) is $q>1$. In this case any possible solution $z$ of (5.4) satisfies $|z|=p/q$, and the existence of the corresponding $z$ can be verified
using decidability of $\WP(G)$.

Suppose that $p=1$. Then the necessary condition for solvability of (5.4) is $q =1$ and $i_1 = j_1$.
In this case we only need to verify the existence of $z$ satisfying (5.4) in the  group $\langle a_n , b_{i_1} \rangle$.
This is the only place where we need the oracle for $(\im F)_n$.
Verifying whether $p_{n}\in (\im F)_{n}$, we decide if $n \in \im F_1$.
If this happens, we easily compute in the oracle $(\im F)_n$ the relation from the presentation (5.1) of the form $a_{n} = b^r_{i_1}$ if it exists, and if it does not exist we recognize this.
In the latter case any possible solution $z$ of equation (5.4) satisfies $|z|\leqslant |u|_{X_n}$.
In the former case
$\langle a_n , b_{i_1} \rangle = \langle b_{i_1} \rangle$.
Substituting the appropriate power of $b_{i_1}$ instead of $a_n$ both in $u$ and $v$ we obtain an equation in the cyclic group
$\langle b_{i_1} \rangle$.
Now the number $z$ can be computed.
This gives an appropriate algorithm which is computable with respect to $(\im\, F)_n$.

The case $p=0$ is trivial and we leave it to the reader.

This completes the proof of statement~(3) for $\PP [g_0 ,G]$. The argument for $\PP [G,v]$ is analogous.

\medskip

{\it Proof of statement} (4). Let $n\in \im\, F_1$.
The following equivalence recognizes $j \in (\im\, F)_n$ under the oracle for $\PP [a_n, G]$:
\[
j\in (\im\, F)_n\,\, \Longleftrightarrow\,\, j \mbox{ is of the form }p^k_n
\mbox{ and } a_n \mbox{ is a power of  }b_j .
\]
\end{proof}

\begin{remark}
Statements (1) and (4) also hold for the corresponding versions of the Knapsack Problem.
\end{remark}

The following corollary is Theorem B from Introduction.

\begin{corollary} \label{cor1/2}
There exists a finitely presented torsion-free group $G$ with decidable conjugacy problem and undecidable $\PP[1]$ such that any r.e. Turing degree is realised as the Turing degree of the problem
$\PP[g,G]$ for appropriate $g\in G$.
\end{corollary}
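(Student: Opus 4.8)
\emph{Overall plan.} The plan is to realise the group as a finitely presented over-group of the recursively presented group $G$ of Theorem~\ref{1/2}, for a carefully chosen parameter $F$, and then to transfer the required properties along the embedding. The parameter $F$ must be chosen so that $\im(F_1)$ is not computable (for undecidability of $\PP[1]$) while the columns $(\im F)_n$ realise \emph{every} recursively enumerable Turing degree.

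\emph{Choice of $F$.} Fix the standard enumeration $W_0,W_1,\dots$ of all recursively enumerable subsets of $\mathbb{N}$, fix a non-computable recursively enumerable set $C\subseteq\mathbb{N}$, and recall that $p_j$ is the $j$-th prime. Write $A$ and $B$ for the sets of even and odd natural numbers and put $\tau(c)=2c+1$. Consider
\[
\mathcal{E}=\{(2e,p_{2e}):e\in\mathbb{N}\}\cup\{(2e,p_{2e}^{k+2}):e\in\mathbb{N},\ k\in W_e\}\cup\{(2c+1,p_{2c+1}):c\in C\}\subseteq\mathbb{N}^2 .
\]
This set is recursively enumerable and infinite, so, after a computable identification $\mathbb{N}^2\cong\mathbb{N}$, it is the range of a total injective computable function $F=(F_1,F_2):\mathbb{N}\to\mathbb{N}^2$. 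Each column $(\im F)_n$ is either empty, or equal to $\{p_n\}\cup\{p_n^{k+2}:k\in W_e\}$ for $n=2e$, or equal to $\{p_n\}$ for $n\in\tau(C)$; hence $F$ satisfies the hypotheses imposed before Theorem~\ref{1/2}, $(\im F)_{2e}\equiv_T W_e$ for every $e$, and $\im(F_1)=A\cup\tau(C)$ is not computable (else $C$ would be, since $A$ is computable and $\tau$ is a computable bijection of $\mathbb{N}$ onto $B$).

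\emph{Passage to a finitely presented group.} Let $G$ be the group of Theorem~\ref{1/2} for this $F$. By part~(1), $\CP(G)$ is decidable and $G$ is torsion-free (each free factor $H_i$ is obtained from infinite cyclic groups by direct products and by amalgamated products over infinite cyclic subgroups, and a free product of torsion-free groups is torsion-free); by part~(2), $\PP[1]$ is undecidable because $\im(F_1)$ is not computable; and by parts~(3)--(4), $\PP[a_{2e},G]\equiv_T(\im F)_{2e}\equiv_T W_e$ for every $e$. Since $\CP(G)$ is decidable, I would now apply the Ol'shanskii--Sapir machinery recalled in Remark~\ref{rem_computable}: first the computable embedding of~\cite{O1} into a $2$-generated recursively presented group, which keeps the conjugacy problem decidable by Lemmas~10 and~11 of~\cite{OS2}, and then the conjugacy-problem-preserving Higman embedding of~\cite{OS1} into a finitely presented group. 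This produces a computable embedding $\varphi:G\hookrightarrow\overline{G}$ with $\overline{G}$ finitely presented, torsion-free, and of decidable conjugacy problem. (Theorem~\ref{OS2} is not available here, since $\PP[1]$ is undecidable; only the weaker, conjugacy-only embedding applies.)

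\emph{Transfer and the hard point.} Undecidability of $\PP[1]$ descends to $\overline{G}$: were $\PP[1]$ decidable for $\overline{G}$, then, $\varphi$ being computable, we could decide for each $n$ whether $\varphi(a_n)=\varphi(b_{p_n})^x$ — equivalently, by injectivity of $\varphi$, whether $a_n=b_{p_n}^x$ — is solvable, i.e.\ whether $n\in\im(F_1)$, which is impossible. Fix $e$ and put $g=\varphi(a_{2e})$; the claim is $\PP[g,\overline{G}]\equiv_T W_e$. One inequality is easy: $k\in W_e$ if and only if $\varphi(b_{p_{2e}^{k+2}})\in\PP[g,\overline{G}]$ (if $\varphi(a_{2e})=\varphi(b_{p_{2e}^{k+2}})^z$ then $a_{2e}=b_{p_{2e}^{k+2}}^z$ in $G$ by injectivity, and conversely), and $\varphi(b_{p_{2e}^{k+2}})$ is computable from $k$, so $W_e\leq_T\PP[g,\overline{G}]$. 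For the reverse inequality one needs that the Ol'shanskii--Sapir embedding controls centralizers and subgroup membership of image elements: any $h\in\overline{G}$ some power of which equals $\varphi(a_{2e})$ centralises $\varphi(a_{2e})$, hence lies in $\varphi\big(C_G(a_{2e})\big)=\varphi(H_{2e})$, so $h=\varphi(r)$ for a root $r$ of $a_{2e}$ in $G$; thus $\PP[g,\overline{G}]$ reduces to $\PP[a_{2e},G]$, which by Theorem~\ref{1/2}(3) is $\leq_T(\im F)_{2e}\equiv_T W_e$. Since $W_0,W_1,\dots$ exhaust all recursively enumerable sets, every recursively enumerable Turing degree equals $\deg_T\big(\PP[\varphi(a_{2e}),\overline{G}]\big)$ for a suitable $e$, and $\overline{G}$ is the required group. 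I expect the genuine difficulty to be exactly this reverse reduction — verifying that the Higman embedding introduces no ``new'' roots of the elements $\varphi(a_{2e})$, i.e.\ that centralizers in $\overline{G}$ are the images of centralizers in $G$ and that the image subgroups have decidable membership with computable $\varphi^{-1}$ — which should follow from the structural control built into the Ol'shanskii--Sapir construction; everything else is routine bookkeeping resting on Theorem~\ref{1/2}.
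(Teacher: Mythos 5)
Your overall architecture is the same as the paper's: choose the parameter $F$ so that Theorem~\ref{1/2} gives a recursively presented $G$ with decidable $\CP$, undecidable $\PP[1]$, and columns $(\im F)_n$ of all r.e.\ degrees; then embed $G$ into a finitely presented $\overline{G}$ using Steps 2--3 of the Ol'shanskii--Sapir machinery of Remark~\ref{rem_computable} and transfer the properties. Your choice of $F$ (padding the even columns with the sets $W_e$ and the odd columns with a noncomputable r.e.\ set $C$ to make $\im F_1$ noncomputable) differs from the paper's choice (a universal function together with the extension $\widehat{W}$), but it satisfies the hypotheses before Theorem~\ref{1/2} and is equally good; the easy reduction $W_e\leq_T \PP[\varphi(a_{2e}),\overline{G}]$ and the transfer of undecidability of $\PP[1]$ via computability of $\varphi$ are also fine.

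The genuine gap is the reverse reduction $\PP[\varphi(a_{2e}),\overline{G}]\leq_T \PP[a_{2e},G]$, which you yourself flag as ``the hard point'' and do not prove. Your proposed route rests on the claim that the Ol'shanskii--Sapir embedding creates no new roots, i.e.\ that $C_{\overline{G}}(\varphi(a_{2e}))=\varphi(C_G(a_{2e}))$ and that membership in $\varphi(G)$ is decidable with computable $\varphi^{-1}$. None of the results invoked in Remark~\ref{rem_computable} (Lemmas 10--12 of \cite{OS2}, or the embedding of \cite{OS1}) asserts such centralizer or subgroup-membership control; they only assert preservation of decidability of the word, conjugacy and power problems, so this claim would need a separate, nontrivial analysis of both the $2$-generator embedding of \cite{O1} and the Higman embedding of \cite{OS1}. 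The paper closes exactly this gap by a different argument: it proves the Claim that for every $g\in G$ the problems $\PP[g,G_1]$ and $\PP[g,G_3]$ are computationally equivalent, appealing to the \emph{proofs} (not the statements) of Lemmas 11 and 12 of \cite{OS2} --- given an exponential equation $g=u^z$ over $G_2$ (resp.\ $G_3$), those proofs either effectively reduce it to an equation over $G_1$ (resp.\ $G_2$) or yield a linear upper bound on $|z|$ in terms of $g$ and $u$, after which decidability of $\WP(\overline{G})$ settles solvability. Without either that case analysis or a proof of your centralizer-preservation claim, the computation of the degree of $\PP[\varphi(a_{2e}),\overline{G}]$ --- the heart of the corollary --- is not established.
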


\medskip

{\it Proof.} First we construct a recursively presented group $G$ with these properties.
Let $\varphi (x,y)$ be Kleene's universal computable function.
Let
\[
W= \{ (x,z) \, | \, \exists y \,( z=\varphi (x,y)) \}
\]
and  let $\Phi: \mathbb{N}\rightarrow \mathbb{N}^2$ be a total, injective, computable function with
$\im\, \Phi = W$. Below we use notations introduced at the beginning of this subsection.
Obviously, the sets $(\im\, \Phi)_n$, $n\in \mathbb{N}$, have all possible r.e. Turing degrees.
Now we extend the set $W$ as follows:
\[
\widehat{W}= W\cup \{ (x,1) \, | \, \exists z :(x,z)\in W \}.
\]
Let $\widehat{\Phi}: \mathbb{N}\rightarrow \mathbb{N}^2$ be a total, injective, computable function
with $\im\, \widehat{\Phi} = \widehat{W}$. We have $(\im\, \widehat{\Phi})_n=(\im\, \Phi)_n\cup \{1\}$
for any $n\in \mathbb{N}$.
Therefore the sets $(\im\, \widehat{\Phi})_n$, $n\in \mathbb{N}$ have all possible r.e. Turing degrees as well.

Now we define a function $F:\mathbb{N}\rightarrow \mathbb{N}^2$  by the formula
$$
F=f\circ \widehat{\Phi},
$$
where $f:\mathbb{N}^2\rightarrow \mathbb{N}^2$ is the function
sending each $(n,m)$ to $(n, p^m_n)$.
The function $F$ satisfies
the conditions formulated at the beginning of this subsection, since it is
total, injective, computable, and for any $n\in \mathbb{N}$ we have
$$
p_n\in (\im F)_n\subseteq\{ p^k_{n}\, |\, k\in \mathbb{N} \setminus \{ 0 \}\}.
$$
Finally, we define a recursively presented group $G$ by formula (5.1) and apply Theorem~\ref{1/2}.
By statements~(1) and (2) of this theorem, $\CP(G)$ is decidable and $\PP[1]$ is undecidable for $G$.

The statement of the corollary about Turing degrees follows from statement (4) of Theorem~\ref{1/2} which says that,
for any $n\in \mathbb{N}$, the problem $\PP[a_{n},G ]$ is computably equivalent to
the membership problem for $(\im\, F)_{n}$.
It remains to note that
$$
(\im\, F)_n=\{p_n^m\,|\, m\in (\im\, \widehat{\Phi})_n\},
$$
therefore these sets have all possible r.e. Turing degrees.

\medskip

$\bullet$ Now we embed the group $G$ into a finitely presented group $\overline{G}$
using Ol'shanskii~-- Sapir construction explained in Remark~\ref{rem_computable}.
Note that we do not need to do Step~1 there since $G$ already has solvable conjugacy problem, and we do not need
to do Step 4 since we do not specially want $\overline{G}$ to be 2-generated.

Thus, using notations of this remark, we may assume that $G=G_1$ and that we have embeddings $G_1\rightarrow G_2\rightarrow G_3$, where $G_3=\overline{G}$.
Simplifying notation, we assume $G_1\leqslant G_2\leqslant G_3$. By this construction, $G_3$
has solvable conjugacy problem if $G_1$ has solvable conjugacy problem. The latter is valid, hence $\overline{G}$ has solvable conjugacy problem. It remains to prove the following claim.



\medskip

{\it Claim.} For any $g\in G$ the problems $\PP[g,G_1]$ and $\PP[g,G_3]$ are computationally equivalent.

\medskip

{\it Proof.}
The computational equivalence of $\PP[g,G_1]$ and $\PP[g,G_2]$ follows from the proof of Lemma 11 in~\cite{OS2}.
(We stress that we use the proof and not the formulation of this lemma which requires solvability of power problem in $G_1$.) Indeed, given an exponential equation $g=u^z$ with $u\in G_2$,
the proof (depending on~$u$) either recursively reduces this equation to $\PP[g,G_1]$ or gives a linear upper bound for $|z|$ in terms of $g$ and $u$.

The computational equivalence of $\PP[g,G_2]$ and $\PP[g,G_3]$ analogously follows from the proof of Lemma 12 in~\cite{OS2}.
\hfill $\Box$ $\Box$

\section{Further observations. Complexity}

Applying the approach of \cite{BCR} we obtain the following proposition.

\begin{proposition}
\begin{enumerate}
\item[{\rm (1)}] Detecting a group with decidable $\PP [n]$ is $\Sigma^0_3$ in the class of recursively presented groups.
\item[{\rm (2)}] The same conclusion holds both for the problem $\underset{n\in \mathbb{N}}\bigcup \PP[n]$ and the Knapsack Problem.
\end{enumerate}
\end{proposition}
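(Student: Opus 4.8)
The plan is to reduce everything to a quantifier count, once the three sets have been described uniformly. First I would fix a standard effective indexing of recursively presented groups: code such a $G$ by a pair $(X,e)$, where $X$ is a computable generating set and $e$ indexes a computable enumeration of a set $R$ of relators, so $G=\langle X\mid R\rangle$; all assertions below are meant to be uniform in this index $i$.

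The one point with actual content is that each of the three sets is \emph{uniformly} computably enumerable. For a recursively presented $G$, the word problem $\{w : w=_G 1\}$ is r.e. uniformly in $i$, since $w=_G 1$ holds exactly when $w$ lies in the normal closure of $R$ in $\F(X)$, and one enumerates all products $\prod_j u_j r_j^{\pm1} u_j^{-1}$ with $u_j\in\F(X)$ and $r_j$ ranging over the enumeration of $R$, checking which freely reduce to $w$. A tuple $\bar g=(g_0,\dots,g_n)$ lies in $\PP[n]$ iff $\exists\,\bar z\in\mathbb{Z}^n$ with $g_0^{-1}g_1^{z_1}\cdots g_n^{z_n}=_G 1$; the word on the left is produced computably from $\bar g$ and $\bar z$, and $\mathbb{Z}^n$ is computably enumerable, so $\PP[n]$ is $\Sigma^0_1$, uniformly in $\bar g$ and $i$. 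The same reasoning makes $\bigcup_{n\in\mathbb{N}}\PP[n]$ uniformly $\Sigma^0_1$ (now $n$ is read off the length of the tuple) and the Knapsack Problem uniformly $\Sigma^0_1$ (now $\bar z$ ranges over $\mathbb{N}^n$, again computably enumerable). Hence in each case the set in question equals $W_{h(i)}$ for a fixed computable $h$, where $W_e$ denotes the $e$-th r.e. set.

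The second step is the count. Writing $P=W_{h(i)}$ for whichever of the three sets we consider, and using that $P$ is already r.e., we have that $P$ is decidable iff $\overline{P}$ is r.e., i.e. iff $\exists e\,(W_e=\overline{W_{h(i)}})$. Now $W_e=\overline{W_{h(i)}}$ unwinds to $\forall x\big[\neg(x\in W_e\wedge x\in W_{h(i)})\ \wedge\ (x\in W_e\vee x\in W_{h(i)})\big]$: the first inner conjunct is $\Pi^0_1$ and the second is $\Sigma^0_1$, so after $\forall x$ the matrix is $\Pi^0_2$, whence $W_e=\overline{W_{h(i)}}$ is $\Pi^0_2$ in $e$ and $i$, and prefixing $\exists e$ yields a $\Sigma^0_3$ predicate in $i$. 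As $i$ ranges over codes of recursively presented groups, this is precisely the assertion that detecting decidable $\PP[n]$ — and likewise detecting decidable $\bigcup_{n\in\mathbb{N}}\PP[n]$ and decidable Knapsack Problem — is $\Sigma^0_3$ in the class of recursively presented groups, which proves both parts.

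I do not anticipate a genuine obstacle: the argument is pure bookkeeping, and the only thing that needs care is that the $\Sigma^0_1$ descriptions of the three sets, hence the function $h$, are genuinely uniform in the chosen indexing of recursively presented groups — this is exactly what the framework of \cite{BCR} supplies. I would also remark that the proposition claims only an upper bound (membership in $\Sigma^0_3$), in keeping with the fact that the general predicate ``$W_e$ is computable'' is already $\Sigma^0_3$; a matching $\Sigma^0_3$-hardness statement would require exhibiting an explicit family of recursively presented groups realising the lower bound, which is not asserted here.
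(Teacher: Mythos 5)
Your proof is correct and follows essentially the same route as the paper: both arguments rest on the observation that $\PP[n]$ (and its variants) is computably enumerable uniformly in the index of the recursive presentation, and then count quantifiers in a $\Sigma^0_3$ expression of ``this c.e.\ set is decidable.'' The only difference is cosmetic: the paper writes decidability as the existence of an index $m$ of a total $0$--$1$ valued algorithm $\varphi(m,\cdot)$ whose answers are verified against the positive enumeration of $\PP[n]$ and the stagewise refutation of equalities, while you write it as the existence of an index $e$ with $W_e=\overline{W_{h(i)}}$; both yield the same $\exists\forall\exists$ shape.
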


\begin{proof}
We will use standard terminology from \cite{soare}.
The universal computable function $\varphi (x,y)$ will be applied to several families of objects.
As usual these objects are coded by natural numbers.

Take a computable indexation
$G_i = \langle X \, | \, \mathcal{R}_i \rangle$, $i\in \omega$,
of all recursively presented groups with respect
to generators $X= \{ x_1, x_2 ,\ldots \}$.
Fix an algorithm which for the input $(i,s)$ outputs the $s$-th
equality of the form $w = 1$ satisfied in $G_i$.
We see that the set of pairs $(G_i ,w)$, where $G_i \models w=1$  with $w \in \F(X)$ is computably enumerable.
There also exists a computable enumeration of the set of pair
$(G_i ,\bar{w})$ where $\bar{w} = (w_0 , w_1 , \ldots , w_n )$ belongs to $\PP [n](G_i )$.
Thus the set
$I_{\bar{w}} = \{ G_i \, | \, (w_0 , w_1 , \ldots , w_n )\in \PP[n](G_i)\}$ is computably enumerable.
These sets belong to $\Sigma^0_1$.
On the other hand the set $\bar{I}_{\bar{w}} = \{ G_i \, | \, (w_0 , w_1 , \ldots , w_n )\not\in \PP[n](G_i)\}$ belongs to $\Pi^0_1$.
The property $(w_0 , w_1 , \ldots , w_n )\not\in \PP[n](G_i)$ exactly means that for any
$(s_0 , s_1 ,\ldots ,s_n )$ the equality
$w_0 = w^{s_1}_1 \cdot \ldots \cdot w^{s_n}_n$ is not recognized in $G_i$ at step $|s_0|$.
Developing these observations we formulate decidability of $\PP [n]$ for $G_i$ as follows.
\begin{quote}
There is a number $m\in \mathbb{N}$ such that for any tuple
$w_0 , w_1 , \ldots , w_n\in \F(X)$
and any $(s_0 , s_1 ,\ldots ,s_n )\in \mathbb{Z}^{n+1}$
there exist numbers $s, t\in \mathbb{N}$ such that the following properties hold:
\begin{itemize}
\item the algorithm $\varphi (m, . )$ applied to the code of $\bar{w}$ gives the value $0$ or $1$ at step $s$;
\item the algorithm $\varphi (m, . )$ applied to the code of $\bar{w}$ gives the value $0$ at step $s$ or the membership
$G_i \in I_{\bar{w}}$ is confirmed at step $t$ of computation;
\item  the algorithm $\varphi (m, . )$ applied to the code of $\bar{w}$ gives the value $1$ at step $s$
or the equality $w_0 = w^{s_1}_1 \cdot \ldots \cdot w^{s_n}_n$ is not recognized at step $|s_0|$.
\end{itemize}
\end{quote}
The second statement of the proposition is similar.
\end{proof}

{\bf Question.}
Are $\PP [n]$ and the Knapsack Problem $\Sigma^0_3$-complete in the class of recursively presented groups?

\def\refname{REFERENCES}
\bigskip

\end{document}